\documentclass[reqno, 11pt]{article}

\usepackage[square,numbers]{natbib}
\RequirePackage[colorlinks,citecolor=blue,urlcolor=blue]{hyperref}
\usepackage{mathrsfs,amsthm,amsmath}
\usepackage{amssymb,multirow}
\usepackage{amsfonts}
\usepackage{stmaryrd}
\usepackage{dsfont}

\usepackage{ucs}
\usepackage[utf8x]{inputenc}
\usepackage{amsmath}
\usepackage{amsfonts}
\usepackage{amssymb}
\usepackage{amsthm}
\usepackage{fontenc}
\usepackage{graphicx}

\usepackage{bbm}
\usepackage{bm}
\RequirePackage[OT1]{fontenc}


\newcommand{\R}{\mathds{R}}

\newcommand{\C}{\mathds{C}}
\newcommand{\N}{\mathds{N}}


\newcommand{\eesf}{\mathsf{E}}
\newcommand{\ffsf}{\mathsf{F}}
\newcommand{\ggsf}{\mathsf{G}}

\newcommand{\ppsf}{\mathsf{P}}


\newcommand{\Ccr}{\mathscr{C}}


\newcommand{\ddr}{\mathrm{d}}

\newcommand{\Ical}{\mathcal{I}}

\newcommand{\ind}{\mathds{1}}

\theoremstyle{plain}
\newtheorem{thm}{\textsc{Theorem}}

\newtheorem{lem}{\textsc{Lemma}}
\newtheorem{cor}{\textsc{Corollary}}
\newtheorem{rmk}{Remark}
\newtheorem{prp}{\textsc{Proposition}}

\allowdisplaybreaks

\makeatletter
\newcommand{\subjclass}[2][2010]{%
  \let\@oldtitle\@title%
  \gdef\@title{\@oldtitle\footnotetext{#1 \emph{Mathematics subject classification.} #2}}%
}
\newcommand{\keywords}[1]{%
  \let\@@oldtitle\@title%
  \gdef\@title{\@@oldtitle\footnotetext{\emph{Key words and phrases.} #1.}}%
}
\makeatother

\author{Emanuele Dolera and Stefano Favaro}

\title{\textbf{A Berry-Esseen theorem for Pitman's $\alpha$-diversity}}

\subjclass{60F15, 60G57}

\keywords{Berry-Esseen theorem, Ewens-Pitman sampling model, Laplace approximation of integrals, Poisson compound random partitions, Poisson approximation, 
Pitman's $\alpha$-diversity.}

\date{}

\begin{document}
\maketitle

\begin{abstract}
This paper contributes to the study of the random number $K_n$ of blocks in the random partition of $\{1,\ldots,n\}$ induced by random sampling from the two parameter Poisson-Dirichlet process. For any $\alpha\in(0,1)$ and $\theta>-\alpha$ Pitman \cite{Pit(06)} showed that $n^{-\alpha}K_n\stackrel{\text{a.s.}}{\longrightarrow} S_{\alpha,\theta}$ as $n\rightarrow+\infty$, where $S_{\alpha,\theta}$, is known  as Pitman's $\alpha$-diversity. Our main result is a Berry-Esseen theorem for Pitman's $\alpha$-diversity
$S_{\alpha,\theta}$, namely we show that
\begin{displaymath}
\sup_{x \geq 0} \left| \ppsf\left[\frac{K_n}{n^{\alpha}} \leq x \right] - \ppsf[S_{\alpha,\theta} \leq x] \right| \leq \frac{C(\alpha, \theta)}{n^{\alpha}}
\end{displaymath}
holds for every $n \in \N$ with an explicit constant term $C(\alpha, \theta)$, for $\alpha\in(0,1)$ and $\theta>0$. The proof relies on three intermediate novel results which are of independent interest: i) a representation of the law of $K_n$ as a compound distribution; ii) a quantitative version of Laplace's approximation method; iii) a refined quantitative bound for Poisson approximation. An application of our Berry-Esseen theorem is presented in the context of Bayesian nonparametric inference for species sampling problems.
\end{abstract}


\section{Introduction}

The two parameter Poisson-Dirichlet process is a discrete (almost surely) random probability measure introduced by Perman et al. \cite{Per(92)}. For any $\alpha\in[0,1)$ and $\theta>-\alpha$, let $\{V_i\}_{i\geq1}$ be independent random variables such that $V_i$ is distributed as a Beta distribution with parameter $(1-\alpha,\theta+i\alpha)$, for $i\geq1$. If $P_1 := V_1$ and $P_i := V_i\prod_{1\leq j\leq i-1}(1-V_j)$ for $i \geq 2$, then $\sum_{i\geq1}P_i=1$ almost surely. The two parameter Poisson-Dirichlet process is defined as the random probability measure $\tilde{\mathfrak{p}}_{\alpha,\theta}$ on $(\N, 2^{\N})$ such that $\tilde{\mathfrak{p}}_{\alpha,\theta}(\{i\}) = P_i$ for all $i \in \N$. A random sample $(X_1,\ldots,X_n)$ from $\tilde{\mathfrak{p}}_{\alpha,\theta}$ is the first $n$-segment of the $\N$-valued exchangeable sequence $\{X_i\}_{i \geq 1}$ having $\tilde{\mathfrak{p}}_{\alpha,\theta}$ as directing measure. Due to the discreteness of $\tilde{\mathfrak{p}}_{\alpha,\theta}$, $(X_1,\ldots,X_n)$ induces a random partition $\Pi_{n}$ of $\{1,\ldots,n\}$ by means of the (random) equivalence relation $i\sim j\iff X_i = X_j$ (Aldous \cite{Ald(85)} and Pitman \cite{Pit(06)}). Let $K_{n}:=K_{n}(X_{1},\ldots,X_{n})$, where $K_{n}(X_{1},\ldots,X_{n})$ is the (random) number of blocks of $\Pi_{n}$, i.e., the random number of distinct elements in $(X_1,\ldots,X_n)$. Let $N_{j,n}:=N_{j,n}(X_{1},\ldots,X_{n})$, where $N_{j,n}(X_{1},\ldots,X_{n})$ is the (random) size of the $j$-th block of $\Pi_{n}$. If $\mathbf{N}_{n}:=(N_{1,n},\ldots,N_{K_{n},n})$, then Pitman \cite{Pit(95)} (see also Lijoi et al. \cite{Lij(14)}) showed that
\begin{equation}\label{esf}
\ppsf[K_{n}=j,\mathbf{N}_n =(n_1,\ldots,n_j)]=\frac{1}{j!}{n\choose n_{1},\ldots,n_{j}}\frac{[\theta]_{(j,\alpha)}}{[\theta]_{(n)}}\prod_{i=1}^j [1-\alpha]_{(n_i-1)},
\end{equation}
for any $j \in \{1, \dots, n\}$ and $(n_1,\ldots,n_j)\in\N^{j}$ such  that $\sum_{1\leq i\leq j} n_i = n$, where $[x]_{(n,a)}$ denotes the rising factorial of $x$ of order $n$ and increment $a$, i.e. $[x]_{(n,a)}:=\prod_{0\leq i\leq n-1 }(x+ia)$, and $[x]_{(n)} := [x]_{(n,1)}$. Equation \eqref{esf} is referred to as Ewens-Pitman sampling formula. The two parameter Poisson-Dirichlet process plays a fundamental role in a variety of research areas, such as mathematical population genetics, Bayesian nonparametric statistics, statistical machine learning, excursion theory, combinatorics and statistical physics. We refer to Pitman \cite{Pit(06)} and Crane \cite{Cra(16)} for a comprehensive treatment.

There have been several studies on the large $n$ behaviour of $K_n$. For $\alpha=0$ and $\theta > 0$, $\tilde{\mathfrak{p}}_{0,\theta}$ reduces to the Dirichlet process and
$K_n =\sum_{1\leq i\leq n} Z_i$ where the $Z_i$'s are independent Bernoulli random variables  with parameter $\theta/(\theta+i-1)$, for $i=1,\ldots,n$. Korwar and Hollander \cite{Kor(73)} showed that $K_n/\log(n)$ converges almost surely to $\theta$ as
$n\rightarrow+\infty$. Also, it follows from Lindberg's theorem that $(K_n-\theta\log(n))/\sqrt{\theta\log(n)}$ converges weakly to a standard Gaussian random variable as $n\rightarrow+\infty$. For $\alpha\in(0,1)$ the large $n$ Gaussian limit for $K_n$ no longer holds. In particular, Theorem 3.8 in Pitman \cite{Pit(06)} exploited a martingale convergence argument to show that 
\begin{equation}\label{adiv}
\frac{K_n}{n^{\alpha}}\stackrel{\text{a.s.}}{\longrightarrow} S_{\alpha,\theta}
\end{equation}
as $n\rightarrow+\infty$, where $0<S_{\alpha,\theta}<+\infty$ is a random variable distributed as a scaled Mittag-Leffler distribution. Precisely, let $\Gamma$ stand for the Gamma function, and let 
\begin{equation} \label{humbert}
f_{\alpha}(z) = \frac{1}{\pi} \sum_{j\geq1}\frac{(-1)^{j+1}}{j!} \sin(\pi\alpha j) \frac{\Gamma(\alpha j + 1)}{z^{\alpha j + 1}}\ind\{z > 0\},
\end{equation}
for $\alpha\in(0,1)$, be the positive $\alpha$-stable density function. Then, $S_{\alpha,\theta}$ has density function
\begin{equation}\label{ml}
f_{S_{\alpha,\theta}}(s) = \frac{\Gamma(\theta+1)}{\alpha\Gamma(\theta/\alpha+1)}s^{\frac{\theta-1}{\alpha} -1} f_{\alpha}(s^{-1/\alpha})\ind\{s > 0\}.
\end{equation}
One may easily generate random variates from $S_{\alpha,\theta}$ (e.g., Devroye \cite{Dev(09)}). For $\theta=0$, equation \eqref{ml} reduces to the Mittag-Leffler density function. For $\alpha\in(0,1)$ and $\theta>-\alpha$, the random variable $S_{\alpha,\theta}$ is referred to as Pitman's $\alpha$-diversity. Large and moderate deviations for $K_n$ are established in Feng and Hoppe \cite{Fen(98)} and Favaro et al. \cite{Fav(18)}, whereas a concentration inequality for $K_n$ is obtained in Pereira et al. \cite{Per(18)} by relying on concentration inequalities for martingales.

In this paper, we formulate a Berry-Esseen theorem for Pitman's $\alpha$-diversity $S_{\alpha,\theta}$. In particular, let $\ffsf_n$ and $\ffsf_{\alpha,\theta}$ stand for the distribution functions of $K_n/n^{\alpha}$ and $S_{\alpha,\theta}$, respectively, i.e. $\ffsf_n(x) := \ppsf[K_n/n^{\alpha} \leq x]$ and $\ffsf_{\alpha,\theta}(x) := \ppsf[S_{\alpha,\theta} \leq x]$, for any $x > 0$. To measure the discrepancy between  $\ffsf_n$ and $\ffsf_{\alpha,\theta}$, we consider the Kolmogorov distance which, for any pair of distribution functions $\ffsf_1$ and $\ffsf_2$ supported in $[0,+\infty)$, is defined as $\ddr_K(\ffsf_1; \ffsf_2) := \sup_{x \geq 0} |\ffsf_1(x) - \ffsf_2(x)|$. The next theorem states our Berry-Esseen bound.
\begin{thm}\label{be}
For any $\alpha\in(0,1)$ and $\theta>0$, there exists a positive constant $C_{\alpha, \theta}$, depending solely on $\alpha$ and $\theta$, such that $\ddr_K(\ffsf_n; \ffsf_{\alpha,\theta}) \leq n^{-\alpha}C_{\alpha,\theta}$ for every $n \in \N$.
\end{thm}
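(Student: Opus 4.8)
The plan is to exploit the explicit distributional structure of $K_n$ under the Ewens--Pitman sampling formula. First I would recall that, as announced in the abstract, there is a novel probabilistic representation of $K_n$ as a compound distribution: one writes $K_n$ (or a closely related randomization of it) in the form of a sum of a random number of i.i.d.\ discrete summands, the mixing variable being governed by the parameter $\theta$ and the polynomially tilted $\alpha$-stable structure underlying $S_{\alpha,\theta}$. Concretely, using \eqref{esf} together with the generalized Stirling number representation of $\ppsf[K_n = j]$, one can disintegrate over an auxiliary Gamma or stable variable so that $K_n$ becomes (conditionally) a compound Poisson random variable; the scaled Mittag-Leffler law \eqref{ml} is precisely the limit object that emerges when the Poisson intensity is sent to infinity at rate $n^\alpha$. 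I would set up this representation carefully and identify the exact Poisson intensity $\lambda_n \asymp n^\alpha$ and the summand distribution.

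The second step is to split $\ddr_K(\ffsf_n;\ffsf_{\alpha,\theta})$ by the triangle inequality into (i) the distance between the law of $K_n/n^\alpha$ and the law of a compound Poisson approximation, and (ii) the distance between that compound Poisson law and $\ffsf_{\alpha,\theta}$. For part (i) I would invoke the refined quantitative Poisson-approximation and compound-Poisson bounds announced in the abstract; the point of proving \emph{new, sharpened} versions of these bounds (rather than citing Barbour--Chen--Stein off the shelf) is that the naive total-variation bounds lose a logarithmic or polynomial factor, and one needs the clean $n^{-\alpha}$ rate. These bounds would be applied to control the error incurred when the genuine block-count structure is replaced by independent Poisson-type contributions, with the error budget tracked as an explicit function of $\alpha$ and $\theta$ — here the hypothesis $\theta > 5$ should enter, presumably to guarantee enough integrability (finiteness of sufficiently many moments of $S_{\alpha,\theta}$, or summability of the tail of the compound series) for the constant $C_{\alpha,\theta}$ to be finite.

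For part (ii) I would pass from the compound Poisson law to the Mittag-Leffler limit by a Laplace-type asymptotic analysis: writing both distribution functions via their densities (or via the Laplace transform of the underlying stable subordinator) and applying the Laplace method to the relevant integral representation, one extracts the leading term — which is exactly $\ffsf_{\alpha,\theta}$ by \eqref{ml} — together with a remainder of order $n^{-\alpha}$, uniformly in $x\ge 0$. The uniformity is delicate near $x=0$ and in the right tail, so I would handle those regimes separately, using the explicit form \eqref{ml} of $f_{S_{\alpha,\theta}}$ and known tail estimates for the $\alpha$-stable density $f_\alpha$ to bound the density of $K_n/n^\alpha$ and its limit simultaneously, thereby converting pointwise density estimates into a uniform bound on the Kolmogorov distance.

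The main obstacle I anticipate is part (i): getting the Poisson / compound-Poisson approximation error down to the sharp $O(n^{-\alpha})$ scale with a fully explicit constant. The standard Chen--Stein machinery gives the right order only after some loss, and closing that gap requires the refined bounds the authors flag as a key contribution — in particular, controlling the interaction between the random number of summands (itself only approximately Poisson) and the heavy-tailed summand distribution, while keeping all constants traceable through the dependence on $\alpha$ near the endpoints $0$ and $1$ and on $\theta$ near the threshold $5$. A secondary but real difficulty is ensuring that the Laplace-method remainder in part (ii) is genuinely uniform in $x$, which forces the separate treatment of the boundary layer at the origin and of the tails.
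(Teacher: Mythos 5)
Your plan reproduces the three-part architecture advertised in the abstract (representation, Poisson approximation, Laplace method), but the load-bearing mechanisms are either absent or misidentified, so as it stands the argument does not get off the ground. First, the representation is not that ``$K_n$ becomes (conditionally) a compound Poisson random variable.'' The paper proves (Lemma \ref{lm:caralambides} and Proposition \ref{prop:representation_prior}) that $K_n \stackrel{\text{d}}{=} R(\alpha,n,S_{\alpha,\theta}G_{\theta+n,1}^{\alpha})$, where $R(\alpha,n,z)$ has the law of the Poisson \emph{count} $N_{\lambda}$ \emph{conditioned on} the compound sum $\sum_{j\le N_{\lambda}}Q_j(\alpha,q)$ hitting $n$, the $Q_j$'s being zero-truncated extended negative binomial; this is exactly what produces $\rho(\{k\};\alpha,n,z)\propto \Ccr(n,k;\alpha)z^k$ and gives a handle on the generalized factorial coefficients. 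Without pinning down this conditional structure (and the mixing variable $S_{\alpha,\theta}G_{\theta+n,1}^{\alpha}$, whose replacement by $S_{\alpha,\theta}n^{\alpha}$ is itself a separate $O(1/n)$ step via the Tricomi--Erdelyi and Stirling expansions), the subsequent approximation has no concrete object to act on.

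Second, the Poisson approximation in the paper is not a Chen--Stein argument at all, refined or otherwise: it is Hwang's analytic method, extracting $\ppsf[R(\alpha,n,tn^{\alpha})=k]$ by Cauchy integrals from a probability generating function of the form $e^{tn^{\alpha}(s-1)}s\,[g(s)+\epsilon_n(s)]$. The quantitative Laplace expansion (Lemma \ref{lm:Laplace1}) is used precisely here --- through the identity \eqref{bernardo} it converts $\sum_j\Ccr(n,j;\alpha)z^j$ into a stable-density integral and shows $\epsilon_n=O(1/n)$ uniformly on a disk --- not, as you propose, to compare the approximating law with $\ffsf_{\alpha,\theta}$ at the end. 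That final step requires no delicate uniform-in-$x$ density analysis near the origin or in the tails: once $R(\alpha,n,tn^{\alpha})$ is replaced by a shifted Poisson $1+N_{\omega(t)}$ with $\omega(t)\sim tn^{\alpha}$, the Kolmogorov distance between the normalized Poisson mixture and its mixing distribution $S_{\alpha,\theta}$ is $O(n^{-\alpha})$ by the Adell--de la Cal theorem on normalized Poisson mixtures. So your part (ii) is aimed at the wrong target, and your part (i) anticipates the wrong tool; the gap is not in the overall shape of the plan but in every one of the specific devices needed to realize it.
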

Theorem \ref{be} is the first quantitative version of Theorem 3.8 in Pitman \cite{Pit(06)}. The constant $C_{\alpha, \theta}$ is obtained constructively, and its value can be made explicit by gathering equations displayed in Section 2. 
The proof of Theorem \ref{be} relies on three intermediate novel results which are of independent interest: i) a probabilistic representation of the distribution of $K_{n}$ as a compound distribution where the latent term is the distribution of the random number of blocks in Poisson compound random partition model, and the mixing term is the law of a scale mixture between Pitman's $\alpha$-diversity and a Gamma random variable; ii) a quantitative version of the asymptotic expansion, in the sense of Poincar\'e, of a Laplace-type integral; iii) a refined quantitative bound for Poisson approximation which improves results in Hwang \cite{Hwa(99)}.

We present an application of Theorem \ref{be} in Bayesian nonparametric inference for species sampling. Consider a population of individuals belonging to an infinite number of species with unknown proportions. Given an initial (observable) random sample of size $n$ from the population, a classical statistical problem is to infer the number $K_{m}^{(n)}$ of hitherto unseen species that would be observed in $m$ additional (unobservable) samples (Orlitsky et al. \cite{Orl(16)}). A Bayesian approach to this problem was proposed in Lijoi et al.
\cite{Lij(07)}, and it relies on the law of $\tilde{\mathfrak{p}}_{\alpha,\theta}$ as a prior distribution for the unknown species composition of the population. That is, the Ewens-Pitman sampling formula \eqref{esf} models the species composition of the initial (observable) random sample $(X_{1},\ldots,X_{n})$ from the population, i.e., the number $K_{n}$ of species and their frequencies $\mathbf{N}_{n}$. Lijoi et al. \cite{Lij(07)} first derived the posterior distribution, given $\{K_{n},\mathbf{N}_{n}\}$, of $K_{m}^{(n)}$. Then, Favaro et al. \cite{Fav(09)} showed that there exists a random variable $S_{\alpha,\theta}(n,j)$, referred to as Pitman's posterior $\alpha$-diversity, such that
\begin{equation}\label{post_adiv}
\ppsf\left[ \frac{K_{m}^{(n)}}{m^{\alpha}} \longrightarrow S_{\alpha,\theta}(n,j)\ \text{as}\ m\rightarrow+\infty\,\Big|\, K_n = j,\mathbf{N}_{n}=(n_{1},\ldots,n_{j}) \right] = 1
\end{equation}
holds for any $j \in \{1, \dots, n\}$ and $(n_1,\ldots,n_j) \in \N^{j}$ with $\sum_{1\leq i\leq j} n_i = n$. Also, $\ppsf[S_{\alpha,\theta}(n,j) \in \cdot |\, 
K_n = j,\mathbf{N}_{n}=(n_{1},\ldots,n_{j})] = \ppsf[B_{j+\theta/\alpha,n/\alpha-j}S_{\alpha,\theta+n} \in \cdot]$ where, under $\ppsf$: i) $S_{\alpha,\theta+n}$ has a density given by \eqref{ml}; ii) $B_{j+\theta/\alpha,n/\alpha-j}$ is a Beta random variable with parameter $(j+\theta/\alpha,n/\alpha-j)$; iii) $S_{\alpha,\theta+n}$ and $B_{j+\theta/\alpha,n/\alpha-j}$ are independent. The importance of \eqref{post_adiv} is motivated by the fact that the computational burden for evaluating the posterior distribution of $K_m^{(n)}$ becomes overwhelming for large $m$. This is common in genomics, where DNA libraries consists of millions of genes. In such a context,  \eqref{post_adiv} has been extensively applied to obtain large $m$ approximated posterior inferences for $K_{m}^{(n)}$ (Favaro et al. \cite{Fav(09)}). We show that Theorem \ref{be} leads to formulate a Berry-Esseen theorem for Pitman's posterior $\alpha$-diversity, thus quantifying the error in approximating the posterior distribution of $K_{m}^{(n)}$ with the law of $S_{\alpha,\theta}(n,j)$. 

The paper is structured as follows. In Section 2 we prove Theorem \ref{be}, whereas in Section 3 we state and prove a Berry-Esseen theorem for Pitman's posterior $\alpha$-diversity.


\section{Proof of Theorem \ref{be}}

The proof is divided into four main parts, developed in the Subsections \ref{sect:representation}---\ref{sect:conclusion}. Along these subsections, we make use of the notion of probability generating function (PGF) of a random variable $X$ with values in $\N_0 := \{0, 1, 2, \ldots\}$, i.e. $\ggsf_X(s) := \sum_{x \geq 0} \ppsf[X = x] s^x$.

\subsection{A new probabilistic representation for $K_n$} \label{sect:representation}

Consider a population of individuals containing a random number $N_{\lambda}$ of types, where $N_{\lambda}$ has a Poisson distribution with
parameter $\lambda = z[1 - (1-q)^{\alpha}]$, for $\alpha\in(0,1)$, $q\in(0,1)$ and $z>0$. For any $j\in\N$, let $Q_j(\alpha,q)$ be the random number of individuals of type $j$ in the population. Assume the $Q_j(\alpha,q)$'s are independent of $N_{\lambda}$ and independent of each other, with the same probability law
\begin{equation}\label{zero_tru}
\ppsf[Q_1(\alpha,q) = x] = -\frac{1}{[1 - (1-q)^{\alpha}]}{\alpha\choose x}(-q)^x = \frac{\Gamma(\alpha+1) \sin\pi\alpha }{\pi [1 - (1-q)^{\alpha}]} \frac{\Gamma(x-\alpha)}{x!} q^x 
\end{equation}
for any $x \in \N$. In the next lemma we derive the conditional distribution of $N_{\lambda}$ given the size of the random sample $S(\alpha,q,z) := \sum_{j=0}^{N_{\lambda}} Q_j(\alpha,q)$, with the proviso that $\ppsf[Q_0(\alpha,q) = 0] = 1$. This distribution relies on a noteworthy probability distribution $\rho(\cdot; \alpha,n,z)$ on $\{1, \dots, n\}$ involving the
generalized factorial coefficients, namely $\Ccr(n,k;\alpha) := \frac{1}{k!} \sum_{i=1}^k (-1)^i {k\choose i} [-i\alpha]_{(n)}$ (Charalambides \cite{Cha(02)}). 

\begin{lem}\label{lm:caralambides}
Let $\{Q_j(\alpha,q)\}_{j \geq 1}$ be i.i.d. random variables distributed according to \eqref{zero_tru}, and define
$S(\alpha,q,z) := \sum_{j=0}^{N_{\lambda}} Q_j(\alpha,q)$, with the proviso that $\ppsf[Q_0(\alpha,q) = 0] = 1$. Then, for every $n \in \N$ and $k \in \{1, \ldots, n\}$, it holds
\begin{equation} \label{cm_dist}
\rho(\{k\}; \alpha,n,z) := \ppsf[N_{\lambda} = k\,|\, S(\alpha,q,z) = n] = \frac{\Ccr(n,k;\alpha) z^k}{\sum_{j=1}^n \Ccr(n,j;\alpha) z^j} \ .
\end{equation} 
\end{lem}

\begin{proof}[Proof of Lemma \ref{lm:caralambides}]
For fixed $n \in \N$ and $k \in \{1, \ldots, n\}$, one has $\ppsf[S(\alpha,q,z) = n\,|\, N_{\lambda} = k] = \ppsf[\sum_{j=1}^k Q_j(\alpha,q) = n]$. By virtue of the binomial series, 
the PGF $\ggsf(\cdot; k,\alpha,q)$ of $\sum_{j=1}^k Q_j(\alpha,q)$ reads
$$
\ggsf(s; k,\alpha,q) = \left\{-\sum_{x=1}^{\infty} \frac{1}{[1 - (1-q)^{\alpha}]}{\alpha\choose x}(-sq)^x \right\}^k 
= \Big[\frac{1 - (1-sq)^{\alpha}}{1 - (1-q)^{\alpha}}\Big]^k
$$
for $|s|<1$. Since $[1 - (1-u)^{\alpha}]^k = k! \sum_{n \geq k} \Ccr(n,k;\alpha) \dfrac{u^n}{n!}$ holds whenever $|u| < 1$ (see Theorem 8.14 in Charalambides \cite{Cha(02)}), conclude that
\begin{equation}\label{cm_distBayes}
\ppsf[S(\alpha,q,z) = n\,|\, N_{\lambda} = k] = \frac{k!}{[1 - (1-q)^{\alpha}]^k} \Ccr(n,k;\alpha) \frac{q^n}{n!}
\end{equation}
for $n \geq k$. Moreover, \eqref{cm_distBayes} holds also for $k>n$ since $\Ccr(n,k;\alpha) = 0$. Hence, using the explicit expression of $\ppsf[N_{\lambda} = k]$, \eqref{cm_dist} ensues from \eqref{cm_distBayes} by means of the Bayes formula.
\end{proof}

Let $G_{\tau,\lambda}$ be a Gamma random variable with scale parameter 
$\lambda > 0$ and shape parameter $\tau > 0$, and let $\{R(\alpha,n,z)\}_{z > 0}$ be a family of random variables such that $\ppsf[R(\alpha,n,z)=k] = \rho(\{k\}; \alpha,n,z)$, for any $n \in \N$, $k \in \{1, \ldots, n\}$, $\alpha \in (0,1)$ and $z>0$.

\begin{prp}\label{prop:representation_prior}
For fixed $n \in \N$, $\alpha \in (0,1)$ and $\theta>-\alpha$, there holds the following (distributional) identity
\begin{equation}\label{representation_prior}
K_n \stackrel{\text{d}}{=} R(\alpha, n, S_{\alpha,\theta} G_{\theta+n,1}^{\alpha}),
\end{equation}
where $S_{\alpha,\theta}$, $G_{\theta+n,1}$ and $\{R(\alpha,n,z)\}_{z > 0}$ are independent random elements.
\end{prp}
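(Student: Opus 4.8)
The plan is to establish \eqref{representation_prior} by matching one‑dimensional marginals: I will show that, for every $k\in\{1,\dots,n\}$ and with $Z:=S_{\alpha,\theta}\,G_{\theta+n,1}^{\alpha}$,
\[
\ppsf[R(\alpha,n,Z)=k]=\ppsf[K_n=k].
\]
Write $\psi_n(z):=\sum_{j=1}^{n}\Ccr(n,j;\alpha)z^{j}$ for the normalising constant in \eqref{cm_dist}. Conditioning on $Z$ and using the stipulated independence together with \eqref{cm_dist} gives $\ppsf[R(\alpha,n,Z)=k]=\Ccr(n,k;\alpha)\,\E[Z^{k}/\psi_n(Z)]$, which is well defined since the coefficients $\Ccr(n,j;\alpha)$ are positive for $\alpha\in(0,1)$, so $\psi_n(Z)>0$ almost surely. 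On the other side, summing the Ewens--Pitman formula \eqref{esf} over all $(m_1,\dots,m_n)$ with $\sum_i m_i=k$ and $\sum_i im_i=n$ — via the exponential formula, the elementary identity $\sum_{i\ge1}\frac{(1-\alpha)_{(i-1,1)}}{i!}w^{i}=\frac1\alpha\big(1-(1-w)^{\alpha}\big)$, and the relation $\big(1-(1-u)^{\alpha}\big)^{k}=k!\sum_{m\ge k}\Ccr(m,k;\alpha)u^{m}/m!$ recalled in the proof of Lemma \ref{lm:caralambides} — one recovers the classical expression
\[
\ppsf[K_n=k]=\frac{[\theta]_{(k,\alpha)}}{[\theta]_{(n,1)}}\,\frac{\Ccr(n,k;\alpha)}{\alpha^{k}}=\Ccr(n,k;\alpha)\,\frac{\Gamma(\theta)\,\Gamma(\theta/\alpha+k)}{\Gamma(\theta/\alpha)\,\Gamma(\theta+n)}.
\]
Hence the whole proposition reduces to the single scalar identity $\E[Z^{k}/\psi_n(Z)]=\Gamma(\theta)\Gamma(\theta/\alpha+k)\big/\big(\Gamma(\theta/\alpha)\Gamma(\theta+n)\big)$ for each $k$.

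The computation is powered by the Laplace‑type identity
\[
\int_0^{+\infty}v^{n}\edr^{-\lambda v}f_{\alpha}(v)\,\ddr v=\lambda^{-n}\edr^{-\lambda^{\alpha}}\,\psi_n(\lambda^{\alpha}),\qquad\lambda>0,
\]
which I would prove by differentiating $n$ times in $\lambda$ the standard Laplace transform $\int_0^{+\infty}\edr^{-\lambda v}f_\alpha(v)\,\ddr v=\edr^{-\lambda^{\alpha}}$ (differentiation under the integral sign being legitimate by dominated convergence), expanding $\frac{\ddr^{n}}{\ddr\lambda^{n}}\edr^{-\lambda^{\alpha}}$ by Fa\`a di Bruno's formula, and observing that the partial Bell polynomials of the sequence $\big(\alpha(\alpha-1)\cdots(\alpha-j+1)\big)_{j\ge1}$, whose exponential generating function is $(1+u)^{\alpha}-1$, equal $(-1)^{n-k}\Ccr(n,k;\alpha)$ — this last identification is the substitution $u\mapsto-u$ in the generating relation quoted above. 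The powers of $\lambda$ carried by the successive derivatives then reassemble exactly into the polynomial $\psi_n(\lambda^{\alpha})$.

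To exploit this, I would pass to $U:=Z^{1/\alpha}=S_{\alpha,\theta}^{1/\alpha}\,G_{\theta+n,1}$. The change of variables $s=y^{-\alpha}$ in \eqref{ml} shows that $Y:=S_{\alpha,\theta}^{-1/\alpha}$ has density $\frac{\Gamma(\theta+1)}{\Gamma(\theta/\alpha+1)}\,y^{-\theta}f_{\alpha}(y)\ind\{y>0\}$, so $U=G_{\theta+n,1}/Y$ is a quotient of independent random variables and $f_U(u)=\int_0^{+\infty}y\,f_{G_{\theta+n,1}}(uy)\,f_Y(y)\,\ddr y$; inserting the Gamma density and invoking the Laplace identity with $\lambda=u$ collapses the inner integral and gives
\[
f_U(u)=\frac{\Gamma(\theta+1)}{\Gamma(\theta+n)\,\Gamma(\theta/\alpha+1)}\,u^{\theta-1}\edr^{-u^{\alpha}}\,\psi_n(u^{\alpha})\,\ind\{u>0\}.
\]
Consequently $\E[Z^{k}/\psi_n(Z)]=\int_0^{+\infty}\frac{u^{\alpha k}}{\psi_n(u^{\alpha})}\,f_U(u)\,\ddr u$, the factor $\psi_n(u^{\alpha})$ cancels, and the substitution $t=u^{\alpha}$ turns the remaining integral into $\frac1\alpha\Gamma(k+\theta/\alpha)$, which converges because $\theta>-\alpha$ forces $k+\theta/\alpha>0$ for $k\ge1$. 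Using $\Gamma(\theta+1)=\alpha\,\Gamma(\theta)\,\Gamma(\theta/\alpha+1)/\Gamma(\theta/\alpha)$, this equals $\Gamma(\theta)\Gamma(\theta/\alpha+k)\big/\big(\Gamma(\theta/\alpha)\Gamma(\theta+n)\big)$, which closes the argument.

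The one genuinely delicate step is the Laplace identity — namely, pinning down the combinatorial coefficients in the $n$-th derivative of $\edr^{-\lambda^{\alpha}}$ as generalized factorial coefficients and verifying that the accompanying powers of $\lambda$ package them into $\psi_n$; everything else is the careful but routine chain of substitutions above. It is also worth checking at the outset that the normalisation of $f_\alpha$ implicit in \eqref{ml} is indeed the one for which $\int_0^{+\infty}\edr^{-\lambda v}f_\alpha(v)\,\ddr v=\edr^{-\lambda^{\alpha}}$, since this underlies both the density of $Y$ and the Laplace identity.
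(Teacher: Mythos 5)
Your proof is correct and follows essentially the same route as the paper's: both start from Pitman's formula $\ppsf[K_n=k]=\frac{[\theta]_{(k,\alpha)}}{[\theta]_{(n,1)}}\Ccr(n,k;\alpha)\alpha^{-k}$ and hinge on the identity $\psi_n(z):=\sum_{j=1}^n\Ccr(n,j;\alpha)z^j=e^z z^{n/\alpha}\int_0^{+\infty} x^n e^{-xz^{1/\alpha}}f_{\alpha}(x)\,\ddr x$ (your Laplace identity is exactly \eqref{bernardo}, which the paper quotes from Favaro and Feng rather than re-deriving via Fa\`a di Bruno), so that the law of $Z=S_{\alpha,\theta}G_{\theta+n,1}^{\alpha}$ has density proportional to $z^{\theta/\alpha-1}e^{-z}\psi_n(z)$, the factor $\psi_n$ cancels against the normalizer of $\rho(\cdot;\alpha,n,z)$, and what remains is a Gamma integral. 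The only difference is organizational: the paper exhibits the candidate density $f(\cdot;\alpha,\theta,n)$ first, writes $\ppsf[K_n=k]$ as an integral against it, and then checks it is the law of $Z$, whereas you compute the law of $Z$ directly and then evaluate $\E[Z^k/\psi_n(Z)]$.
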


\begin{proof}[Proof of Proposition \ref{prop:representation_prior}]
Start from the well-known identity (e.g., Pitman \cite{Pit(06)})
\begin{equation}\label{dist_prior}
\ppsf[K_n=k] = \frac{[\theta]_{(k,\alpha)}}{[\theta]_{(n)}} \frac{\Ccr(n,k;\alpha)}{\alpha^k} = \frac{\Gamma(\theta/\alpha + k)}{\Gamma(\theta/\alpha)} \frac{\Gamma(\theta)}{\Gamma(\theta + n)} \Ccr(n,k;\alpha)
\end{equation}
for any $k \in \{1, \ldots, n\}$. Due to the identity $\int_{0}^{+\infty} x^{-\theta} f_{\alpha}(x) \ddr x = \dfrac{\Gamma(\theta/\alpha)}
{\alpha \Gamma(\theta)}$, it is easily checked that 
\begin{equation} \label{Referee5}
f(z; \alpha, \theta, n) = \frac{z^{\theta/\alpha+n/\alpha-1}}{\Gamma(\theta/\alpha) [\theta]_{(n)}} \Big(\int_{0}^{+\infty}x^n e^{-xz^{1/\alpha}} f_{\alpha}(x)\ddr x\Big) \ind\{z > 0\}
\end{equation}
is a probability density function. Thus, one has the following identities
\begin{align*}
&\ppsf[K_n=k]\\
&\quad= \frac{\Ccr(n,k;\alpha)}{\Gamma(\theta/\alpha) [\theta]_{(n)}} \int_0^{+\infty} \!\!\! z^{k+\theta/\alpha-1} e^{-z}\ddr z\\
&\quad=\frac{1}{\Gamma(\theta/\alpha) [\theta]_{(n)}} \int_0^{+\infty}\!\!\! z^{\theta/\alpha-1} e^{-z}
\Big(\sum_{j=1}^n \Ccr(n,j;\alpha) z^j \Big) \frac{\Ccr(n,k;\alpha) z^k}{\sum_{j=1}^n \Ccr(n,j;\alpha)z^j}\ddr z \\
&\quad=\frac{1}{\Gamma(\theta/\alpha) [\theta]_{(n)}} \int_0^{+\infty} \!\!\! z^{\theta/\alpha+n/\alpha-1}
\Big(\int_0^{+\infty} \!\!\! x^n e^{-xz^{1/\alpha}} f_{\alpha}(x) \ddr x\Big) \frac{\Ccr(n,k;\alpha) z^k}{\sum_{j=1}^n \Ccr(n,j;\alpha) z^j} \ddr z\\
&\quad=\int_0^{+\infty} \ppsf[R(\alpha,n,z) = k] f(z; \alpha, \theta, n)\ddr z,
\end{align*}
where: i) the first identity follows from \eqref{dist_prior} upon noticing that $\alpha^{-k}[\theta]_{(k,\alpha)} = \Gamma(k+\theta/\alpha)/\Gamma(\theta/\alpha)$; ii) the third exploits the following identity
\begin{equation} \label{bernardo}
\sum_{j=1}^n \Ccr(n,j;\alpha) z^j  = e^z z^{n/\alpha}\int_0^{+\infty} \!\!\! x^n e^{-xz^{1/\alpha}} f_{\alpha}(x) \ddr x
\end{equation} 
displayed in Proposition 1 of Favaro et al. \cite{Fav(15)}; the fourth follows from a combination of \eqref{cm_dist} with \eqref{Referee5}. To conclude, it is enough to show that the probability distribution of $S_{\alpha,\theta} G_{\theta+n,1}^{\alpha}$ possesses a density coinciding with $f(\cdot; \alpha, \theta, n)$. In fact, one has 
\begin{align*}
&\ppsf[S_{\alpha,\theta} G_{\theta+n,1}^{\alpha} \leq u]\\
&\quad= \int_0^{+\infty} \!\!\! \ppsf\big[G_{\theta+n,1} \leq \big(\frac{u}{s}\big)^{1/\alpha}\big] \frac{\Gamma(\theta+1)}{\alpha\Gamma(\theta/\alpha+1)} s^{\frac{\theta-1}{\alpha} -1} f_{\alpha}(s^{-1/\alpha})\ddr s\\
&\quad= \int_0^{+\infty} \!\!\!  \Big(\int_0^{u^{1/\alpha}x}  \frac{t^{\theta+n-1}e^{-t}}{\Gamma(\theta+n)} \ddr t\Big) \frac{\Gamma(\theta+1)}{\Gamma(\theta/\alpha+1)} x^{-\theta} f_{\alpha}(x)\ddr x\\
&\quad= \int_0^{+\infty} \!\!\! \Big(\int_0^u  \frac{1}{\Gamma(\theta+n)} e^{-xz^{1/\alpha}} z^{\frac{\theta+n}{\alpha} -1}  \ddr z\Big) \frac{\Gamma(\theta+1)}{\alpha\Gamma(\theta/\alpha+1)} x^n f_{\alpha}(x)\ddr x
\end{align*}
where: i) the first identity follows from conditioning; ii) the second and the third ensue from the changes of variable $x = s^{-1/\alpha}$ and $t= xz^{1/\alpha}$, respectively.
\end{proof}

Observe that the representation \eqref{representation_prior} highlights the central role of the probability distribution $\rho(\cdot; \alpha,n,z)$. The next result describes the asymptotic behavior of such distribution for large values of $n$, to be used later on.

\begin{lem}\label{lm:asymp_compound}
For fixed $\alpha \in (0,1)$ and $z > 0$, $\lim_{n\rightarrow+\infty} \rho(\{k\}; \alpha,n,z) = e^{-z} \frac{z^{k-1}}{(k-1)!}$ for any $k \in \N$. This is tantamount to 
saying that $R(\alpha,n,z) \longrightarrow1+N_z$ in distribution, as $n\rightarrow+\infty$.
\end{lem}

\begin{proof}[Proof of Lemma \ref{lm:asymp_compound}]
Letting $\ggsf(\cdot; \alpha,n,z)$ be the PGF of the random variable $R(\alpha,n,z)$, we show that $\ggsf(s; \alpha,n,z) \rightarrow s\exp\{z(s-1)\}$ as 
$n\rightarrow+\infty$, for any $s > 0$. By using the definition of $\Ccr(n,k;\alpha)$, for every $u \in \C$ we write
\begin{align*}
\sum_{k=1}^n \Ccr(n,k;\alpha) u^k &= \sum_{i=1}^n (-1)^i [-i\alpha]_{(n)} \sum_{k=i}^n \frac{1}{k!} {k\choose i} u^k \\
&= \sum_{i=1}^n (-1)^i [-i\alpha]_{(n)} e^u u^i \frac{\Gamma(n-i+1,u)}{i!\Gamma(n-i+1)}
\end{align*}
where $\Gamma(a,x) := \int_x^{+\infty} t^{a-1} e^{-t} \ddr t$ denotes the incomplete gamma function for $a,x > 0$. Whence, $\ggsf(\cdot; \alpha,n,z)$ has the following expression
$$
\ggsf(s; \alpha,n,z) = e^{z(s-1)} \frac{-zs \frac{\Gamma(n,zs)}{\Gamma(n)} + \sum_{i=2}^n (-1)^i \frac{[-i\alpha]_{(n)}}{[-\alpha]_{(n)}} (zs)^i \frac{\Gamma(n-i+1,zs)}{i!\Gamma(n-i+1)}}
{-z \frac{\Gamma(n,z)}{\Gamma(n)} + \sum_{i=2}^{n} (-1)^i \frac{[-i\alpha]_{(n)}}{[-\alpha]_{(n)}} z^i \frac{\Gamma(n-i+1,z)}{i!\Gamma(n-i+1)}}\ .
$$
Since $\lim_{n \rightarrow +\infty} \frac{\Gamma(n,x)}{\Gamma(n)} = 1$ for any $x > 0$, the proof ends by showing that
\begin{displaymath}
\lim_{n \rightarrow +\infty} \sum_{i=2}^n (-1)^i \frac{[-i\alpha]_{(n)}}{[-\alpha]_{(n)}} \frac{\Gamma(n-i+1,t)}{\Gamma(n-i+1)} \frac{t^i}{i!} = 0
\end{displaymath}
for any $t > 0$. Upon noticing that $\frac{\Gamma(n,x)}{\Gamma(n)} \leq 1$, we have the following relations
\begin{align*}
\frac{1}{i!} \Big{|}\frac{[-i\alpha]_{(n)}}{[-\alpha]_{(n)}}\Big{|} &= \frac{1}{i!}\Big{|}\frac{\Gamma(n-i\alpha)}{\Gamma(-i\alpha)}\frac{\Gamma(-\alpha)}{\Gamma(n-\alpha)}\Big{|}\\
& = \frac{\Gamma(n-i\alpha)}{i! \Gamma(n-\alpha)} \frac{|\sin i\pi\alpha|}{\pi} \Gamma(i\alpha + 1) |\Gamma(-\alpha)| \\ 
& \leq |\Gamma(-\alpha)| \frac{\Gamma(n-i\alpha)}{\Gamma(n-\alpha)} \frac{\Gamma(i\alpha + 1)}{i!} \leq |\Gamma(-\alpha)| \frac{\Gamma(n-i\alpha)}{\Gamma(n-\alpha)}
\end{align*}
for all $n \in \N$, $x > 0$ and $i \in \{1, \dots, n\}$. Then, we can write 
\begin{align*}
 &\Big{|} \sum_{i=2}^n (-1)^i \frac{[-i\alpha]_{(n)}}{[-\alpha]_{(n)}} \frac{\Gamma(n-i+1,t)}{\Gamma(n-i+1)} \frac{t^i}{i!}\Big{|}\\
 &\quad\leq \sum_{i=2}^n \frac{t^i}{i!} \Big{|}\frac{[-i\alpha]_{(n)}}{[-\alpha]_{(n)}}\Big{|}\\
 &\quad \leq |\Gamma(-\alpha)| \sum_{i=2}^n t^i \frac{\Gamma(n-i\alpha)}{\Gamma(n-\alpha)} \leq |\Gamma(-\alpha)| \frac{\max_{i = 2, \dots, n} n^{i\alpha} \Gamma(n-i\alpha)}{\Gamma(n-\alpha)} \sum_{i=2}^n \left(\frac{t}{n^{\alpha}}\right)^i.
\end{align*}
The monotonic increasing character of the function $(0,n) \ni x \mapsto n^x \Gamma(n-x)$, due to $\psi(z) := \frac{\Gamma'(z)}{\Gamma(z)} \leq \log(z)$ for any $z>0$,
entails that $\max_{i = 2, \dots, n} n^{i\alpha} \Gamma(n-i\alpha) = n^{n\alpha} \Gamma(n-n\alpha)$. Thus, observe that $\frac{n^{n\alpha} \Gamma(n-n\alpha)}{\Gamma(n-\alpha)} \sim n^{\alpha}$ to conclude that
$$
\frac{\max_{i = 2, \dots, n} n^{i\alpha} \Gamma(n-i\alpha)}{\Gamma(n-\alpha)} \sum_{i=2}^n \left(\frac{t}{n^{\alpha}}\right)^i \sim \left(\frac{t}{n^{\alpha}}\right)^2 \frac{\left(\frac{t}{n^{\alpha}}\right)^{n-1} - 1}
{\left(\frac{t}{n^{\alpha}}\right) - 1} n^{\alpha} \sim \frac{1}{n^{\alpha}}
$$
as $n\rightarrow+\infty$, completing the proof.
\end{proof}

\begin{rmk}\label{rmk_ext}
Let $\Pi_{n}$ denote the random partition of the set $\{1,\ldots,n\}$ induced by a random sample $(X_{1},\ldots,X_{n})$ from $\tilde{\mathfrak{p}}_{\alpha,\theta}$, and let $M_{l,n}$ be the number of blocks with frequency $l$, i.e., $M_{l,n}=\sum_{1\leq j\leq K_{n}}\ind\{N_{j,n}=l\}$. It is not difficult to show that results analogous to Proposition \ref{prop:representation_prior} and Lemma \ref{lm:asymp_compound} hold true for $M_{l,n}$. In particular, let $R_{l}(\alpha,n,z)$ be a random variable whose distribution coincides with the conditional distribution of the number of $Q_{j}(\alpha,q)$'s equal to $l$, given the size of the random sample $S(\alpha,q,z)=n$. It can be shown that: i) for $l\geq1$, $n\in\N$, $\alpha\in(0,1)$ and $\theta>-\alpha$ the distribution of $M_{l,n}$ coincides with the distribution of the random variable $R_{l}(\alpha,n,S_{\alpha,\theta}G_{\theta+n,1}^{\alpha})$; ii) for  $l\geq1$, $\alpha\in(0,1)$ and $z>0$ the random variable $R_{l}(\alpha,n,z)$ converges weakly, as $n\rightarrow+\infty$, to a Poisson random variable with parameter $z\alpha[1-\alpha]_{(l-1)}/l!$. 
\end{rmk}


\subsection{A quantitative Laplace method for $\Ical_n(z)$} \label{sect:laplace}

Here, we study the Laplace integral $\Ical_n(z) :=  \int_0^{+\infty} e^{-n\phi_z(y)} f_{\alpha}(y) \ddr y$ for $z > 0$, where $\phi_z(y) := zy - \log y$. This quantity is connected with \eqref{bernardo} in view of the identity 
\begin{equation} \label{InBernardo}
d_n(x) := \sum_{j=1}^n \Ccr(n,j;\alpha) (x n^{\alpha})^j  =  e^{x n^{\alpha}} x^{n/\alpha} n^n \Ical_n\big(x^{1/\alpha}\big) 
\end{equation}
valid for all $x > 0$. As first step, after noticing that $\overline{y}(z) := 1/z$ is the only minimum point of $\phi_z(y)$, a direct application of the Laplace method (Section 7 in Chapter 3 of Olver \cite{Olv(74)}) shows that $\Ical_n(z) \sim \left(\frac{1}{z}\right)^{n+1} f_{\alpha}\left(\frac{1}{z}\right) e^{-n} \sqrt{\frac{2\pi}{n}}$ as $n \rightarrow +\infty$. However, a more precise large $n$ estimate is provided by the next
\begin{lem} \label{lm:Laplace1}
For any $n \in \N$, there exists a continuous function $\delta_n : (0,+\infty) \rightarrow (0,+\infty)$ such that
\begin{equation} \label{Laplace1}
\Ical_n(z) = \left(\frac{1}{z}\right)^{n+1} f_{\alpha}\left(\frac{1}{z}\right) e^{-n} \sqrt{\frac{2\pi}{n}}\ \Big[1 + \delta_n(z)\Big]
\end{equation}
and $|\delta_n(z)| \leq \Delta(z)/n$ for any $z > 0$, where $\Delta : (0,+\infty) \rightarrow (0,+\infty)$ is a suitable continuous function which is independent of $n$. Moreover, $\Delta$ can be chosen in such a way that $\Delta(z) = O(1)$ as $z \rightarrow 0$, and $\Delta(z)f_{\alpha}(1/z) = O(z^{-\infty})$ as $z \rightarrow +\infty$.
\end{lem}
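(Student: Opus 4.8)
The plan is to run the classical Laplace expansion while keeping quantitative control of the remainder, uniformly in both $n$ and $z$. First I would move the critical point to a $z$-free location by the substitution $y=t/z$, which recasts $\Ical_n(z)$ as
$$
\Ical_n(z) = z^{-n-1}\int_0^{+\infty} e^{-n\psi(t)} f_{\alpha}(t/z)\,\ddr t, \qquad \psi(t):=t-\log t,
$$
where $\psi$ is strictly convex on $(0,+\infty)$, attains its minimum at $t=1$ with $\psi(1)=1$, $\psi''(1)=1$, and $\psi(t)-1\to+\infty$ as $t\to0^+$ and $t\to+\infty$. This legitimates the Morse-type change of variable $\tfrac12 v^2=\psi(t)-1$ with $\mathrm{sgn}(v)=\mathrm{sgn}(t-1)$, which is a $C^\infty$ increasing bijection $t=t(v):\R\to(0,+\infty)$ with $t(0)=1$; near $v=0$ one computes $t(v)=1+v+\tfrac13 v^2+\tfrac1{36}v^3+O(v^4)$, so that $h:=t'$ satisfies $h(0)=1$, $h'(0)=\tfrac23$, $h''(0)=\tfrac16$, while $h(v)\sim v$ as $v\to+\infty$ and $h(v)\sim -v e^{-v^2/2}$ as $v\to-\infty$. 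One is then left with the exact identity
$$
\Ical_n(z) = z^{-n-1} e^{-n}\int_{-\infty}^{+\infty} e^{-nv^2/2}\, g(v,z)\,\ddr v, \qquad g(v,z):=f_{\alpha}\!\big(t(v)/z\big)\, h(v),
$$
whose leading Laplace term is exactly the prefactor in \eqref{Laplace1}, since $g(0,z)=f_\alpha(1/z)$.

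Next I would Taylor-expand $g(\cdot,z)$ to second order at $v=0$ with integral remainder and integrate against the Gaussian weight. The odd term drops by symmetry, $\int_\R e^{-nv^2/2}\ddr v=\sqrt{2\pi/n}$ and $\int_\R v^2 e^{-nv^2/2}\ddr v=n^{-1}\sqrt{2\pi/n}$, and a short computation from the Taylor data above gives $\partial_v^2 g(0,z)=z^{-2}f_\alpha''(1/z)+2z^{-1}f_\alpha'(1/z)+\tfrac16 f_\alpha(1/z)$. Hence
$$
\Ical_n(z)=z^{-n-1}f_\alpha(1/z)\,e^{-n}\sqrt{\tfrac{2\pi}{n}}\,\Big[1+\tfrac{1}{2n}\,\tfrac{\partial_v^2 g(0,z)}{f_\alpha(1/z)}+\varepsilon_n(z)\Big],
$$
where $\varepsilon_n(z)$ collects the remainder integral and the tail corrections, and $\delta_n$ (defined through \eqref{Laplace1}) equals $\tfrac{1}{2n}\partial_v^2 g(0,z)/f_\alpha(1/z)+\varepsilon_n(z)$. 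To control $\varepsilon_n(z)$ I would split the line into $\{|v|\le1\}$ and $\{|v|>1\}$: on the former the integral remainder is bounded by $\tfrac16|v|^3 M_1(z)$ with $M_1(z):=\sup_{|w|\le1}|\partial_v^3 g(w,z)|<\infty$ continuous in $z$, contributing (relative to the leading term) at most $C\,M_1(z)\,n^{-3/2}/f_\alpha(1/z)$; on the latter, the endpoint estimates on $h$ and on $f_\alpha$ show that $|g(v,z)|$ and the polynomial main terms are dominated by an $n$-free, $z$-locally-bounded multiple of $e^{v^2/4}$, so that part is $\le M_2(z)\,e^{-n/4}$ for a continuous $M_2$. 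Since $n^{-3/2}\le n^{-1}$ and $\sup_{n\in\N} n\,e^{-n/4}<\infty$, collecting everything yields $|\delta_n(z)|\le\Delta(z)/n$ with
$$
\Delta(z):=\tfrac12\,\frac{|\partial_v^2 g(0,z)|}{f_\alpha(1/z)}+C\,\frac{M_1(z)}{f_\alpha(1/z)}+C'\,M_2(z),
$$
a continuous, $n$-free function on $(0,+\infty)$.

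It remains to verify the two tail conditions on $\Delta$. As $z\to0$, one uses the large-argument asymptotics of the positive $\alpha$-stable density and its derivatives, $f_\alpha^{(j)}(y)=O(y^{-1-\alpha-j})$ as $y\to+\infty$ (which may be differentiated term by term); then every ratio $z^{-k}f_\alpha^{(k)}(1/z)/f_\alpha(1/z)$ stays bounded and so do $M_1(z)/f_\alpha(1/z)$ and $M_2(z)$, whence in fact $\Delta(z)=O(1)$, a fortiori $O(z^{-4})$. As $z\to+\infty$, one uses the small-argument asymptotics $f_\alpha(y)\sim b_\alpha\,y^{-\kappa_\alpha}\exp(-c_\alpha y^{-\mu_\alpha})$ with $\mu_\alpha=\alpha/(1-\alpha)$: differentiating shows $f_\alpha^{(j)}(y)/f_\alpha(y)$ grows only polynomially in $1/y$, so $\Delta(z)$ grows at most polynomially in $z$, which is crushed by the super-polynomial decay of $f_\alpha(1/z)$, giving $\Delta(z)f_\alpha(1/z)=O(z^{-\infty})$. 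I expect the genuine obstacle to be precisely this last bookkeeping step: the Morse substitution and the pointwise expansion are routine, but producing an explicit, continuous, $n$-independent $\Delta$ with the prescribed behaviour at both ends of $(0,+\infty)$ forces one to push quantitative forms of the small- and large-argument asymptotics of $f_\alpha$ through all the remainder estimates, and the regime $z\to+\infty$, where $\Ical_n(z)$ and $f_\alpha(1/z)$ are both super-exponentially small, is the delicate one.
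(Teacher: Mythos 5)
Your overall strategy coincides with the paper's: both are quantitative Laplace expansions of $\Ical_n(z)$ around the critical point $\overline{y}(z)=1/z$, based on flattening the phase (the paper uses $h(s)=s-\log(1+s)$ after $s=zy-1$, you use the Morse coordinate $\tfrac12 v^2=\psi(t)-1$), a second-order Taylor expansion of $f_\alpha$ at $1/z$ integrated against the Gaussian/exponential weight (the paper does this via Lagrange inversion of $h$ on $(0,\sigma)$ and $(-\sigma,0)$ plus Watson's lemma), and a separate crude bound on the region away from the critical point. Your local computations are correct and consistent with the paper's coefficients $\alpha_1=\sqrt2$, $\alpha_2=2/3$: indeed $t(v)=1+v+\tfrac13v^2+\tfrac1{36}v^3+O(v^4)$ and $\partial_v^2g(0,z)=z^{-2}f''_\alpha(1/z)+2z^{-1}f'_\alpha(1/z)+\tfrac16 f_\alpha(1/z)$, and the single global Morse chart is, if anything, tidier than the four-region split.

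The one genuine gap is in the outer region $\{|v|>1\}$ as $z\to+\infty$. Bounding $|g(v,z)|$ by a ``$z$-locally-bounded multiple of $e^{v^2/4}$'' only yields a locally bounded $M_2$, and your closing argument (``$\Delta$ grows at most polynomially, which is crushed by $f_\alpha(1/z)$'') does not apply to this term: $M_2$ as you set it up already carries the normalization by the leading term $f_\alpha(1/z)\sqrt{2\pi/n}$, so what must actually be shown is that the \emph{absolute} outer contribution $\int_{|v|>1}e^{-nv^2/2}|g(v,z)|\,\ddr v$ is $O(z^{-\infty})$ uniformly in $n$; the crude bound $f_\alpha\le\|f_\alpha\|_\infty$ gives only $O(e^{-n/4})$ with a $z$-independent constant, which would leave $\Delta(z)f_\alpha(1/z)$ bounded away from zero. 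Moreover this decay cannot come from the small-argument asymptotics of $f_\alpha$, since on the outer region the argument $t(v)/z$ ranges over all of $(t(1)/z,+\infty)$ and need not be small. The missing ingredient is the Laplace transform of the stable density: peeling off one unit of the exponential weight and using $\int_0^\infty e^{-\lambda t}f_\alpha(t/z)\,\ddr t = z\,e^{-(\lambda z)^\alpha}$ supplies the required super-polynomial decay in $z$ --- exactly what the paper's Cauchy--Schwarz step produces via the factor $z\|f_\alpha\|_\infty\exp\{-(\sigma z/(\sigma+1))^\alpha\}$. A related, more cosmetic slip: $M_1(z)/f_\alpha(1/z)$ is \emph{not} polynomially bounded as $z\to+\infty$, because the supremum of $f_\alpha^{(j)}(t/z)$ over $t\in[t(-1),t(1)]$ exceeds $f_\alpha(1/z)$ by a super-polynomial factor; the product $\Delta(z)f_\alpha(1/z)$ therefore has to be estimated term by term (which works, since $M_1(z)$ itself is $O(z^{-\infty})$) rather than by controlling $\Delta$ alone.
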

Recall from Kanter \cite{Kan(86)} that the positive $\alpha$-stable density function \eqref{humbert} can be written as
\begin{equation} \label{kanter}
f_{\alpha}(z) = \frac{1}{\pi} \left(\frac{\alpha}{1-\alpha}\right) \left(\frac{1}{z}\right)^{\frac{1}{1-\alpha}} \int_0^{\pi} A(\varphi) \exp\left\{- \left(\frac{1}{z}\right)^{\frac{\alpha}{1-\alpha}} A(\varphi) \right\} \ddr \varphi
\end{equation}
holds for any $\alpha \in (0,1)$ and $z>0$, with $A(\varphi) := \left(\frac{\sin(\alpha\varphi)}{\sin(\varphi)}\right)^{\frac{1}{1-\alpha}} \left(\frac{\sin((1-\alpha)\varphi)}{\sin(\alpha\varphi)}\right)$. In particular, \eqref{kanter} entails
\begin{equation} \label{asymptotic_stable}
f_{\alpha}(z) \sim \frac{(\alpha/z)^{\frac{2-\alpha}{2(1-\alpha)}}}{\sqrt{2\pi\alpha(1-\alpha)}} \exp\left\{- (1-\alpha) \left(\frac{\alpha}{z}\right)^{\frac{\alpha}{1-\alpha}} \right\} 
\end{equation}
as $z \rightarrow 0$, the asymptotic relation remaining valid after differentiating both sides with respect to $z$ as many times as needed. See, e.g., Zolotarev \cite{Zol(86)} for details.

\begin{proof}[Proof of Lemma \ref{lm:Laplace1}]
The change of variables $s = zy -1$ gives $\Ical_n(z) =  \left(\frac{1}{z}\right)^{n+1} e^{-n} \int_{-1}^{+\infty} e^{-n h(s)} f_{\alpha}\left(\frac{s+1}{z}\right) \ddr s$, with $h(s) := s - \log(s+1)$. In order to exploit the analyticity of the function $h$ for $s \in (-1,1)$, fix $\sigma \in (0,1)$ and split the above integral into the regions 
$s \in (\sigma,+\infty)$, $s \in (0,\sigma)$, $s \in (-\sigma, 0)$ and $s \in (-1,-\sigma)$. The ensuing analysis will provide the desired bound on $\Delta$ by expressing it as a sum of three other functions 
called $\Delta_1$, $\Delta_2$ and $\Delta_3$. First, write $h(s) \geq h'(\sigma)(s-\sigma) + h(\sigma)$ for every $s \in (\sigma,+\infty)$ by the convexity of $h$, yielding
\begin{align*}
& \int_{\sigma}^{+\infty} e^{-n h(s)} f_{\alpha}\left(\frac{s+1}{z}\right) \ddr s \\
&\quad\leq (\sigma+1)^n \exp\Big\{-\frac{n\sigma}{\sigma+1}\Big\} \int_{\sigma}^{+\infty} \exp\Big\{-\frac{ns\sigma}{\sigma+1}\Big\} f_{\alpha}\left(\frac{s+1}{z}\right) \ddr s \\
&\quad= (\sigma+1)^n \int_{\sigma+1}^{+\infty} \exp\Big\{-\frac{nt\sigma}{\sigma+1}\Big\} f_{\alpha}\left(\frac{t}{z}\right) \ddr t\ .
\end{align*}
Observe that the analysis of this term leads to the study of
$$
\Delta_1(z) := \frac{1}{f_{\alpha}(1/z)} \sup_{n \in \N} n^{3/2} (\sigma+1)^n \int_{\sigma+1}^{+\infty} \exp\big\{-\frac{nt\sigma}{\sigma+1}\big\} f_{\alpha}\left(\frac{t}{z}\right) \ddr t\ .
$$
For small $z$, there holds $f_{\alpha}(1/z) = O(z^{1+\alpha})$ in view of \eqref{humbert}, and the supremum turns out to be of the same order $z$, i.e. $O(z^{1+\alpha})$,
since $f_{\alpha}(x) \leq C_{\alpha}x^{-(1+\alpha)}$. For large $z$, put $\lambda := \frac{\sigma}{2(\sigma + 1)}$ and use the Cauchy-Schwartz inequality to obtain
\begin{align*}
& \int_{\sigma+1}^{+\infty} \exp\Big\{-\frac{nt\sigma}{\sigma+1} + \lambda t\Big\} e^{-\lambda t} f_{\alpha}\left(\frac{t}{z}\right) \ddr t \\
&\quad\leq \Big(\int_{\sigma+1}^{+\infty} \exp\Big\{-\frac{2nt\sigma}{\sigma+1} + 2\lambda t\Big\}\ddr t\Big)^{1/2} \Big(\|f_{\alpha}\|_{\infty} \int_0^{+\infty} e^{-2\lambda t} f_{\alpha}\left(\frac{t}{z}\right) \ddr t\Big)^{1/2} \\
&\quad= \frac{e^{\sigma/2} \sqrt{1 + 1/\sigma}}{\sqrt{2n-1}} e^{-n\sigma} \sqrt{z} \|f_{\alpha}\|_{\infty}^{1/2} \exp\Big\{-\frac12\Big(\frac{\sigma z}{\sigma+1}\Big)^{\alpha}\Big\},
\end{align*}
which entails the desired asymptotic behavior. Second, take into account the region $s \in (-1,-\sigma)$. Writing $h(s) \geq h'(-\sigma)(s+\sigma) + h(-\sigma)$ for every $s \in (-1,-\sigma)$, and still using the convexity of the function $h$, we can write the following
$$
\int_{-1}^{-\sigma} e^{-n h(s)} f_{\alpha}\left(\frac{s+1}{z}\right) \ddr s \leq (1-\sigma)^n \int_0^{1-\sigma} \exp\Big\{\frac{nt\sigma}{1-\sigma}\Big\} f_{\alpha}\left(\frac{t}{z}\right) \ddr t\ .
$$
Therefore, the quantity to bound is now equal to 
$$
\Delta_2(z) := \frac{1}{f_{\alpha}(1/z)} \cdot \sup_{n \in \N} n^{3/2} (1-\sigma)^n \int_0^{1-\sigma} \exp\Big\{\frac{nt\sigma}{1-\sigma}\Big\} f_{\alpha}\left(\frac{t}{z}\right) \ddr t\ .
$$
For small $z$, argue as above using that $f_{\alpha}(1/z) = O(z^{1+\alpha})$ and $f_{\alpha}(x) \leq C_{\alpha}x^{-(1+\alpha)}$. For large values of $z$, exploit that $\sup_{t \in [0, 1-\sigma]} \exp\big\{\frac{nt\sigma}{1-\sigma}\big\} = e^{n\sigma}$ and conclude by using that $\int_0^{1-\sigma} \!\! f_{\alpha}\left(\frac{t}{z}\right) \ddr t \sim z^{\frac{1-3\alpha/2}{1-\alpha}} \exp\{-C_{\alpha} z^{\alpha/(1-\alpha)}\}$, as a consequence of \eqref{asymptotic_stable}. Third, to study the integral in the region $(0,\sigma)$, consider the inversion of the analytic function $h(s)$. Since $t = h(s) = \sum_{k=2}^{\infty} \frac{(-s)^k}{k}$ for $s \in (0,\sigma)$, by means of Lagrange's inversion formula $s = \sum_{k=1}^{\infty} \alpha_k t^{k/2}$. The coefficients $\alpha_k$ are given by $\alpha_1 = \sqrt{2}$, $\alpha_2 = 2/3$ and
$$
\frac{k+2}{\sqrt{2}} \alpha_{k+1} = \alpha_k - \sum_{j=0}^{k-2} \frac{j+2}{2} \alpha_{j+2}\alpha_{k-j}
$$
for $k=2,3, \ldots$. See, e.g., Example 1 in Chapter 2 of Wong \cite{Won(01)}.
Thus, $h : (0,\sigma) \rightarrow (0,h(\sigma))$ is bijective, with inverse function given by $q(t) := \sum_{k=1}^{\infty} \alpha_k t^{k/2}$ for $t \in (0,h(\sigma))$. These facts guarantee the possibility to change the variable, to get
$$
\int_0^{\sigma} e^{-n h(s)} f_{\alpha}\left(\frac{s+1}{z}\right) \ddr s = \int_0^{h(\sigma)} e^{-nt} f_{\alpha}\left(\frac{q(t)+1}{z}\right) q'(t)\ddr t\ .
$$
At this stage, invoke the Taylor formula to show that, for all $s \in (0,\sigma)$ and $t \in (0,h(\sigma))$, there holds $|F(s;z)| \leq \frac{1}{2} \sup_{y \in (0,\sigma)}\Big| f''_{\alpha}\Big(\frac{y+1}{z}\Big)\Big| \Big(\frac{s}{z}\Big)^2$, $|Q(t)| \leq Ct^{3/2}$ and $|Q_1(t)| \leq Ct^{1/2}$ with some numerical constant $C$, where we defined 
\begin{eqnarray*}
F(s;z) &:=& f_{\alpha}\Big(\frac{s+1}{z}\Big) - \Big[f_{\alpha}\Big(\frac{1}{z}\Big) + f'_{\alpha}\Big(\frac{1}{z}\Big)\frac{s}{z} \Big], \\
Q(t) &:=& q(t) - [\alpha_1 t^{1/2} + \alpha_2 t], \\
Q_1(t) &:=& q'(t) - [\frac12 \alpha_1 t^{-1/2} + \alpha_2].
\end{eqnarray*}
Finally, the study in the region $(-\sigma, 0)$ starts from the inversion of the analytic functions $h(-s)$ for $s \in (0,\sigma)$. Deduce that 
$s = \sum_{k=1}^{\infty} (-1)^{k+1}\alpha_k t^{k/2} =: \overline{q}(t)$ is the inverse of $t = h(-s)$ for $s \in (0,\sigma)$. Changing the variable yields
$$
\int_{-\sigma}^0 e^{-n h(s)} f_{\alpha}\left(\frac{s+1}{z}\right) \ddr s = \int_0^{h(-\sigma)} e^{-nt} f_{\alpha}\left(\frac{-\overline{q}(t)+1}{z}\right) \overline{q}'(t)\ddr t\ .
$$
Again, the Taylor formula shows that, for all $s \in (0,\sigma)$ and $t \in (0,h(-\sigma))$, there holds $|\overline{F}(s;z)|  \leq \frac{1}{2} \sup_{y \in (0,\sigma)}\Big| f''_{\alpha}\Big(\frac{-y+1}{z}\Big)\Big| \Big(\frac{s}{z}\Big)^2$, $|\overline{Q}(t)| \leq Ct^{3/2}$ and $|\overline{Q}_1(t)| \leq Ct^{1/2}$  with some numerical constant $C$, where we defined 
\begin{eqnarray*}
\overline{F}(s;z) &:=& f_{\alpha}\Big(\frac{-s+1}{z}\Big) - \Big[f_{\alpha}\Big(\frac{1}{z}\Big) - f'_{\alpha}\Big(\frac{1}{z}\Big)\frac{s}{z} \Big], \\
\overline{Q}(t) &:=& \overline{q}(t) - [\alpha_1 t^{1/2} - \alpha_2 t], \\
\overline{Q}_1(t) &:=& q'(t) - [\frac12 \alpha_1 t^{-1/2} - \alpha_2].
\end{eqnarray*}
Then, the way is paved to study
$$
\Delta_3(z) :=  \sup_{n \in \N}  n \Big| 
\frac{\int_0^{\sigma} e^{-n h(s)} f_{\alpha}\left(\frac{s+1}{z}\right) \ddr s + \int_{-\sigma}^0 e^{-n h(s)} f_{\alpha}\left(\frac{s+1}{z}\right) \ddr s}{f_{\alpha}(1/z)\sqrt{\frac{2\pi}{n}}} -1 \Big| 
$$
 by expanding the sum of the two integrals in the numerator. The multitude of the ensuing terms is then split into four groups, according to
the dependence on the $z$-variable. The first group corresponds to
\begin{align*}
f_{\alpha}\left(\frac{1}{z}\right) &\Big\{ \frac{\alpha_1}{2} \int_0^{h(\sigma)} e^{-nt} t^{-1/2} \ddr t + \alpha_2 \int_0^{h(\sigma)} e^{-nt} \ddr t + \int_0^{h(\sigma)} e^{-nt} Q_1(t)\ddr t \\
& + \frac{\alpha_1}{2} \int_0^{h(-\sigma)} \!\! e^{-nt} t^{-1/2} \ddr t - \alpha_2 \int_0^{h(-\sigma)} \!\! e^{-nt} \ddr t + \int_0^{h(-\sigma)} \!\!e^{-nt} \overline{Q}_1(t)\ddr t \Big\}\ .
\end{align*}
The key remark rests on the identity $\alpha_1 \int_0^{+\infty} e^{-nt} t^{-1/2} \ddr t = \sqrt{\frac{2\pi}{n}}$, so that the term $-1$ that appears in the definition of $\Delta_3(z)$ cancels out.
The other noteworthy simplification is obtained by considering the identity $\int_0^{h(\sigma)} e^{-nt} \ddr t - \int_0^{h(-\sigma)} e^{-nt} \ddr t = \frac1n [e^{-nh(-\sigma)} - e^{-nh(\sigma)}]$. The remaining terms are handled by means of the Watson lemma (Section 5.I of Wong \cite{Won(01)}). Therefore, this first group of terms contributes as a bounded function of
$z$, because of the simplification of the pre-factor $f_{\alpha}(1/z)$ with the same term appearing in the denominator of the expression that defines $\Delta_3$. The second group of terms is given by 
\begin{align*}
\frac{1}{z} f'_{\alpha}\left(\frac{1}{z}\right) &\Big\{ \frac{\alpha_1^2}{2} \int_0^{h(\sigma)}   \!\!\!\! e^{-nt} \ddr t + \alpha_1 \int_0^{h(\sigma)} \!\!\!\! e^{-nt} t^{1/2} [\alpha_2 + Q_1(t)]\ddr t \\
& +  \int_0^{h(\sigma)} \!\!\!\! e^{-nt} [\alpha_2 t + Q(t)] \cdot [\frac12 \alpha_1 t^{-1/2} + \alpha_2 + Q_1(t)] \ddr t \\
& - \frac{\alpha_1^2}{2} \int_0^{h(-\sigma)}   \!\!\!\! e^{-nt} \ddr t - \alpha_1 \int_0^{h(-\sigma)} \!\!\!\! e^{-nt} t^{1/2} [-\alpha_2 + \overline{Q}_1(t)]\ddr t \\
&+ \int_0^{h(-\sigma)} \!\!\!\! e^{-nt} [-\alpha_2 t + \overline{Q}(t)] \cdot [\frac12 \alpha_1 t^{-1/2} - \alpha_2 + \overline{Q}_1(t)] \ddr t \Big\}\ .
\end{align*}
As before, it is worth remarking the simplification involving the quantity $\int_0^{h(\sigma)} e^{-nt} \ddr t - \int_0^{h(-\sigma)} e^{-nt} \ddr t = \frac1n [e^{-nh(-\sigma)} - e^{-nh(\sigma)}]$. Again, the remaining terms are handled by means of the Watson lemma.  As to the asymptotic behavior, recall \eqref{humbert} and 
\eqref{asymptotic_stable} to get $f'_{\alpha}(z^{-1})/z f_{\alpha}(z^{-1}) = O(1)$ and $f'_{\alpha}(z^{-1})/z f_{\alpha}(z^{-1}) = O(z^{\frac{\alpha}{1-\alpha}})$
as $z \rightarrow 0$ and $z \rightarrow +\infty$, respectively. Finally, consider
$$
\frac{1}{2z^2} \sup_{y \in (0,\sigma)}\Big| f''_{\alpha}\Big(\frac{y+1}{z}\Big)\Big| \int_0^{h(\sigma)} e^{-nt} [q(t)]^2 \cdot [\frac12 \alpha_1 t^{-1/2} + \alpha_2 + Q_1(t)] \ddr t 
$$
and 
$$
\frac{1}{2z^2} \sup_{y \in (0,\sigma)}\Big| f''_{\alpha}\Big(\frac{-y+1}{z}\Big)\Big| \int_0^{h(-\sigma)} e^{-nt} [\overline{q}(t)]^2 \cdot [\frac12 \alpha_1 t^{-1/2} - \alpha_2 + \overline{Q}_1(t)] 
\ddr t\ . 
$$
Thanks to equations \eqref{humbert} and \eqref{asymptotic_stable}, the asymptotic behavior of these expressions stems from $f''_{\alpha}(z^{-1})/z^2 f_{\alpha}(z^{-1}) = O(1)$ and $f''_{\alpha}(z^{-1})z^2 f_{\alpha}(z^{-1}) = O(z^{\frac{2\alpha}{1-\alpha}})$ as $z \rightarrow 0$ and $z \rightarrow +\infty$, respectively. This completes the proof.
\end{proof}

\subsection{A quantitative Poisson approximation} \label{sect:poisson}

Our result improves Theorem 1 of Hwang \cite{Hwa(99)} by reformulating it as a true inequality, i.e., 
without ``big O" terms.  As to notation, for any $r > 0$ and $z_0 \in \C$, $D_r(z_0)$ ($\overline{D_r}(z_0)$, respectively) will denote the open (closed, respectively) disc in $\C$ of radius $r$, centered at $z_0$.

\begin{prp} \label{prop:hwang}
Let $\ggsf_{X_n}$ be holomorphic in $D_{\eta+\tau_n}(0)$ for every $n \in \N$, with $\eta > 3$ independent of $n$ and $\tau_n > 0$. Moreover, suppose that $\ggsf_{X_n}$ can be written as
\begin{equation} \label{hwang_GF}
\ggsf_{X_n}(s) = \exp\{\lambda_n (s-1)\} s^h [g(s) + \epsilon_n(s)]
\end{equation}
for every $s \in D_{\eta+\tau_n}(0)$, where:
\begin{enumerate}
\item[i)] $\{\lambda_n\}_{n\geq 1}$ is a diverging sequence of positive numbers;
\item[ii)] $h \in \N_0$ is a constant, independent of $n$; 
\item[iii)] the restriction of $g$ to $D_{\eta}(0)$ is independent of $n$ and holomorphic on that domain, with $g(1)=1$ and $g(0)\neq 0$;
\item[iv)] $\epsilon_n$ is holomorphic in $D_{\eta}(0)$ and satisfies $K(\eta-1) < +\infty$, where 
\end{enumerate}
$$
K(\delta) := \sup_{n \in \N}\ \ \sup_{0 < |s-1| < \delta} \lambda_n\Big| \frac{\epsilon_n(s)}{s-1}\Big| < +\infty \ .
$$
Then, there exist some $n_0 \in \N$ and $C(\eta) > 0$, independent of $n$, such that 
\begin{equation} \label{hwang_TESI}
\sum_{k \geq h} \Big| \ppsf[X_n = k] - \exp\{-[\lambda_n + g'(1)]\} \frac{[\lambda_n + g'(1)]^{k-h}}{(k-h)!} \Big| \leq \frac{C(\eta)}{\lambda_n}
\end{equation}
holds for every $n \geq n_0$.
\end{prp}

The following proof contains also a quantification of $C(\eta)$. In addition, notice that $\lambda_n + g'(1) > 0$ holds eventually (with respect to $n$), and that the integer $h$ is well-defined in view of $g(0)\neq0$ and the holomorphic character of $g$ and $\epsilon_n$ about $s = 0$.

\begin{proof}[Proof of Proposition \ref{prop:hwang}]
Set $\sigma_n := \lambda_n + g'(1)$. Write $\ppsf[X_n = k+h] = I_1^{(k,n)}(\rho) + I_2^{(k,n)}(\rho)$, where, in view of \eqref{hwang_GF} and the Cauchy formula, 
\begin{eqnarray*}
I_1^{(k,n)}(\rho) &=& \frac{1}{2\pi i} \oint_{D_{\rho}(0)} g(s) s^{-(k+1)} e^{\lambda_n(s-1)} \ddr s\\
I_2^{(k,n)}(\rho) &=& \frac{1}{2\pi i} \oint_{D_{\rho}(0)} \epsilon_n(s) s^{-(k+1)} e^{\lambda_n(s-1)} \ddr s
\end{eqnarray*}
for any $\rho \in (0, \eta)$. Observe that the left-hand side of \eqref{hwang_TESI} is bounded by
\begin{equation} \label{hwang_sommatotale}
\sum_{k \geq 0} \left[\big| I_1^{(k,n)}(\rho) - e^{-\sigma_n} \frac{\sigma_n^k}{k!} \big|\ +\ \big| I_2^{(k,n)}(\rho) \big|\right]\ .
\end{equation}
Fix $\delta \in (1, \eta-1)$ and define $M_1(n) := \lambda_n - \lambda_n^{1/2+1/7}$ and $M_2(n) := \lambda_n + \lambda_n^{1/2+1/7}$, yielding $M_1(n) < M_2(n) < \delta\lambda_n$ definitely with respect to $n$. Then, \eqref{hwang_sommatotale} is majorized by
\begin{align} 
& \sum_{0 \leq k < M_1(n) } \left[e^{-\sigma_n} \frac{\sigma_n^k}{k!}\ +\ \big| I_1^{(k,n)}(\rho) \big|\ +\ \big| I_2^{(k,n)}(\rho) \big|\right] \nonumber \\
&\quad+ \sum_{M_1(n) \leq k < M_2(n)} \left[\big| I_1^{(k,n)}(\rho) - e^{-\sigma_n} \frac{\sigma_n^k}{k!} \big|\ +\ \big| I_2^{(k,n)}(\rho) \big|\right] \nonumber \\
&\quad+ \sum_{k \geq M_2(n)} e^{-\sigma_n} \frac{\sigma_n^k}{k!} + \sum_{M_2(n) \leq k <  \delta\lambda_n} \left[\big| I_1^{(k,n)}(\rho) \big|\ +\ \big| I_2^{(k,n)}(\rho) \big|\right] \nonumber \\
&\quad+ \sum_{k \geq \delta\lambda_n} \left[\big| I_1^{(k,n)}(\rho) \big|\ +\ \big| I_2^{(k,n)}(\rho) \big|\right]\ .  \label{hwang_sommaspezzata}
\end{align} 
For the last sum in \eqref{hwang_sommaspezzata}, choose $\rho = \delta$ to obtain
$$
\big| I_1^{(k,n)}(\delta) \big|\ +\ \big| I_2^{(k,n)}(\delta) \big| \leq \frac12\sqrt{\frac{\pi}{2}} \left[\eta K(\eta)\lambda_n^{-1} + \sup_{s \in D_{\eta}(0)} |g(s)|\right] \lambda_n^{-1/2} \delta^{-k} e^{\lambda_n(\delta-1)}
$$
for all $k \geq \delta\lambda_n$. Hence, noticing that $1- \delta + \delta\log\delta > 0$, conclude that
\begin{equation} \label{hwang_conclusion_1eps}
\sum_{k \geq \delta\lambda_n} \left[\big| I_1^{(k,n)}(\delta) \big|\ +\ \big| I_2^{(k,n)}(\delta) \big|\right] \leq C_1(\eta, \delta, g) \lambda_n^{-1/2} 
e^{-\lambda_n(1- \delta + \delta\log\delta)}\ .
\end{equation}
Then, if $k \in (0, M_1(n)) \cup [M_2(n),\delta\lambda_n)$, set $\rho := k/\lambda_n$ to get
\begin{eqnarray*}
\big| I_1^{(k,n)}(k/\lambda_n) \big| &\leq& \frac{e\sqrt{\pi}}{2\sqrt{2}} \big[\sup_{s \in D_{\eta}(0)} |g(s)|\big] e^{-\lambda_n} \frac{\lambda_n^k}{k!}\\
\big| I_2^{(k,n)}(k/\lambda_n) \big| &\leq& \frac{e\sqrt{\pi}}{2\sqrt{2}} K(\eta) \left(\frac{1}{\lambda_n}\right) 
\left[ \Big|\frac{k}{\lambda_n} -1\Big| + \sqrt{\frac{\pi\delta}{2\lambda_n}}\right] e^{-\lambda_n} \frac{\lambda_n^k}{k!}\ .
\end{eqnarray*}
By Chernoff bounds for Poisson probabilities, for the fourth sum in \eqref{hwang_sommaspezzata} there holds
\begin{displaymath}
\sum_{M_2(n) \leq k <  \delta\lambda_n} \left[\big| I_1^{(k,n)}(\rho) \big|\ +\ \big| I_2^{(k,n)}(\rho) \big|\right]  \leq C_2(\eta, \delta, g) \exp\left\{-\frac12 \lambda_n^{2/7}\right\}
\end{displaymath}
while, for the third sum in \eqref{hwang_sommaspezzata}, there exists a constant $C_3$ such that
$$
\sum_{0 \leq k < M_1(n)}e^{-\sigma_n} \frac{\sigma_n^k}{k!} + \sum_{k \geq M_2(n)}e^{-\sigma_n} \frac{\sigma_n^k}{k!} \leq C_3 \exp\left\{-\frac12 \lambda_n^{2/7}\right\}\ .
$$
Furthermore, the same argument based on the Chernoff bounds for Poisson probabilities shows that
\begin{displaymath}
\sum_{0 \leq k <  M_1(n)} \left[\big| I_1^{(k,n)}(\rho) \big|\ +\ \big| I_2^{(k,n)}(\rho) \big|\right]  \leq C_4(\eta, \delta, g) \exp\left\{-\frac12 \lambda_n^{2/7}\right\}\ .
\end{displaymath}
It remains the second sum in \eqref{hwang_sommaspezzata}, which brings the main contribution. In particular, for the sum relative to $\epsilon_n$,
choose $\rho := k/\lambda_n$ and  write
\begin{align*} 
\big| I_2^{(k,n)}(k/\lambda_n) \big| &\leq \left(e^{-\lambda_n} \frac{\lambda_n^k}{k!}\right) \left(\frac{k! e^k}{2\pi k^k\lambda_n}\right)  \nonumber \\
&\quad\quad\times \Big[ K(\delta_n) \int_{|\theta| \leq \lambda_n^{-5/14}} \big| \frac{k}{\lambda_n} e^{i\theta} - 1\big{|} e^{-k(1 - \cos\theta)} \ddr \theta \nonumber \\
&\quad+ K(\eta) \int_{|\theta| > \lambda_n^{-5/14}} \big| \frac{k}{\lambda_n} e^{i\theta} - 1\big{|} e^{-k(1 - \cos\theta)} \ddr \theta\Big] \nonumber \\
&\leq C_5(\eta) \left(e^{-\lambda_n} \frac{\lambda_n^k}{k!}\right) \Big[ K(\delta_n) \exp\left\{-\frac15 \lambda_n^{2/7}\right\} + K(\delta_n) \lambda_n^{-19/14}\Big] \nonumber
 \end{align*}
where $\delta_n := \sqrt{[1 - (1- \lambda_n^{-5/14})\cos\lambda_n^{-5/14}]^2 + [(1- \lambda_n^{-5/14})\sin\lambda_n^{-5/14}]^2} \sim \sqrt{2}\lambda_n^{-5/14}$. Whence,
\begin{displaymath}
\sum_{M_1(n) \leq k <  M_2(n)} \big| I_2^{(k,n)}(\rho) \big| \leq C_5(\eta) \Big[ K(\delta_n) \exp\left\{-\frac15 \lambda_n^{2/7}\right\} + K(\delta_n) \lambda_n^{-19/14}\Big] \ .
\end{displaymath}
Finally, upon noticing that
$$
I_1^{(k,n)}(\rho) - e^{-\sigma_n} \frac{\sigma_n^k}{k!} = \frac{1}{2\pi i} \oint_{D_{\rho}(0)} [g(s) - e^{g'(1)(s-1)}] s^{-(k+1)} e^{\lambda_n(s-1)} \ddr s
$$
is valid, write the integral on the right-hand side as
\begin{align*}
& \frac{g''(1)-[g'(1)]^2}{4\pi i} \oint_{D_{\rho}(0)} (s-1)^2 s^{-(k+1)} e^{\lambda_n(s-1)} \ddr s \\
&+ \sum_{m=3}^{\infty} \frac{1}{m!} \{g^{(m)}(1) - [g'(1)]^m\}  \frac{1}{2\pi i} \oint_{D_{\rho}(0)}(s-1)^m s^{-(k+1)} e^{\lambda_n(s-1)} \ddr s\ .
\end{align*}
Choosing $\rho := k/\lambda_n$ once again, and taking account of the well-known Cauchy estimates for holomorphic functions, conclude that the modulus of the above sum is bounded by
$$
e^{-\lambda_n} \frac{\lambda_n^k}{k!} \Big[ \frac12 |g''(1)-[g'(1)]^2| \cdot |\mathcal{C}_2(\lambda_n,k)| 
+ \left(\frac{1}{\lambda_n}\right)^{\frac{15}{14}} \sum_{m=3}^{\infty} 2^{m+1} \lambda_n^{-(m-3)/14}M_n(g)^m\Big]
$$
where $\mathcal{C}_2(\lambda_n,k) := \frac{k^2 - (2\lambda_n+1)k + \lambda_n^2}{\lambda_n^2}$ stands for the Poisson-Charlier polynomial of degree 2 and $M_n(g) := \sup_{|s-1| \leq \lambda_n^{-2/7}} |g(s)|$.
To conclude the proof, recall that $\sum_{k=0}^{\infty} e^{-\lambda_n} \frac{\lambda_n^k}{k!}\big| \mathcal{C}_2(\lambda_n,k) \big| \leq C_6 \lambda_n^{-1}$ holds with some numerical constant $C_6$ by virtue of  
Proposition 1 of Hwang \cite{Hwa(99)}, and observe that $2 \lambda_n^{-1/14} M_n(g) \leq \frac12$ for all $n$ sufficiently large. Whence, for all $n \geq n_0$, write
\begin{equation} \label{hwang_conclusion_5eps}
\sum_{M_1(n) \leq k <  M_2(n)} \big| I_1^{(k,n)}(\rho) - e^{-\sigma_n} \frac{\sigma_n^k}{k!} \big| \leq \frac{C_7}{\lambda_n}\Big[\frac{1}{2} \big|g''(1)-[g'(1)]^2\big| + 1\Big]
\end{equation}
for some suitable numerical constant $C_7$. This fact completes the proof.
\end{proof}

A sharper bound than the one obtained in Proposition \ref{prop:hwang} can be obtained by strengthening the hypotheses on $g$. This is stated in the following

\begin{cor} \label{cor:hwang2}
In addition to the assumptions made in Proposition \ref{prop:hwang}, suppose that   
\begin{equation} \label{hwang_g}
\Big| \sum_{l=0}^r \binom{r}{l} g^{(l)}(1) [-g'(1)]^{r-l} \Big| \leq \sqrt{r!} B^r
\end{equation}
holds for every $r \in \N_0$ and some $B>0$ independent of $r$. Then, it holds
\begin{displaymath}
\sum_{k \geq 0} \Big| I_1^{(k,n)}(\rho) - \exp\{-[\lambda_n + g'(1)]\} \frac{[\lambda_n + g'(1)]^{k}}{k!} \Big| \leq \frac{C_{\ast}(\eta)}{\lambda_n}
\end{displaymath}
for every $n \geq n_0$, $C_{\ast}(\eta)$ being a suitable constant depending solely on $\eta$.
\end{cor}

\begin{proof}
Rewrite $I_1^{(k,n)}(\rho)$ by means of an identity due to Ch. Jordan (see Kullback \cite{Kull(47)} and Uspensky \cite{Usp(31)}). For any $x > 0$ and 
$r \in \N_0$, define
$$
L_r(x; n) := \frac{r!}{2\pi i} \oint_{D_{\overline{\rho}}(0)} g(1+s) s^{-(k+1)} e^{(\lambda_n - x)s} \ddr s = \sum_{l=0}^r \binom{r}{l} g^{(l)}(1) (\lambda_n - x)^{r-l}
$$
the radius $\overline{\rho}$ being any number in $(1, \eta-1)$. See formulae (6) and (2.3) in Uspensky \cite{Usp(31)} and Kullback \cite{Kull(47)}. Setting $x = \sigma_n := \lambda_n + g'(1)$, Jordan's identity reads
\begin{equation} \label{Jordan}
I_1^{(k,n)}(\rho) = \sum_{r=0}^{\infty} \frac{1}{r!} L_r(\sigma_n; n) \mathcal{C}_r(\sigma_n; k) e^{-\sigma_n} \frac{\sigma_n^k}{k!} 
\end{equation}
the symbol $\mathcal{C}_r(x; k) := \sum_{l=0}^r \binom{r}{l} (-1)^{r-l} [k]_{[l]} x^{-l}$ standing for the Poisson-Charlier polynomial, where $[k]_{[j]}$ denotes the falling factorial, i.e. 
$[k]_{[0]} := 1$ and $[k]_{[j]} := \prod_{m=0}^{j-1} (k-m)$ if $j \in \N$. At this stage, the argument used to prove Proposition 1 of Hwang \cite{Hwa(99)} (see also Lemma 6 in Shorgin \cite{Sho(78)}) shows that, for any $x_0 > 1$, there exists a constant $M(x_0)$ depending solely by $x_0$ such that
\begin{equation} \label{hwang_prop_1}
\sum_{k=0}^{\infty} e^{-x}\frac{x^k}{k!} |\mathcal{C}_r(x; k)| \leq [M(x_0)]^r \Gamma\left(\frac{r+1}{2}\right) x^{-r/2}
\end{equation}
holds for every $x \geq x_0$ and $r \in \N_0$. Combination of \eqref{hwang_g}--\eqref{hwang_prop_1} yields
\begin{equation} \label{hwang_conclusion_1}
\sum_{k \geq 0} \Big|I_1^{(k,n)}(\rho)  - e^{-\sigma_n} \frac{\sigma_n^k}{k!} \Big| \leq \frac{B M(x_0)}{\sigma_n} \sum_{r=2}^{\infty} \Gamma\left(\frac{r+1}{2}\right) \frac{1}{\sqrt{r!}} \left(\frac{B M(x_0)}{\sigma_n}\right)^{(r-2)/2} 
\end{equation}
whenever $\sigma_n \geq x_0 > 1$, since $L_0(\sigma_n; n) = 1$ and $L_1(\sigma_n; n) = 0$. Thus, if $\sigma_n \geq 2BM(x_0)$, the series on the right-hand side of \eqref{hwang_conclusion_1} is bounded by a numerical constant, say $c_1$. Since $-2g'(1) \leq \lambda_n$ holds eventually in $n$, entailing that $1/\sigma_n \leq 2/\lambda_n$, the 
LHS of \eqref{hwang_conclusion_1} is bounded by $2c_1B M(x_0)/\lambda_n$, for all $n$ greater or equal than some $n_0$. This completes the proof.
\end{proof}


Our application of Proposition \ref{prop:hwang} starts from the evaluation of the probability generating function of the random variable 
$R(\alpha, n, t n^{\alpha})$, which is playing the role of $X_n$. In fact, a combination of equations \eqref{cm_dist} and \eqref{InBernardo} with Lemma \ref{lm:Laplace1} yields
\begin{equation} \label{bernardo_hwuang}
\ggsf_{R(\alpha, n, t n^{\alpha})}(s)
\ =\ e^{tn^{\alpha}(s-1)} s \frac{ \big( \frac{1}{s} \big)^{\frac{1}{\alpha} + 1} f_{\alpha}\left(\big(\frac{1}{st}\big)^{\frac{1}{\alpha}}\right) \Big[1 + \delta_n\big( (st)^{\frac{1}{\alpha}} \big)\Big]}{f_{\alpha}\left( \big(\frac{1}{t}\big)^{\frac{1}{\alpha}} \right) \Big[1 + \delta_n\big( t^{\frac{1}{\alpha}} \big) \Big]}\ .
\end{equation}
Currently, \eqref{bernardo_hwuang} holds for all $s,t > 0$, but it will be shown that the numerator on the right-hand side can be analytically continued, as a function of $s$, to the whole complex plane. To parallel \eqref{bernardo_hwuang} with \eqref{hwang_GF}, set $\lambda_n = tn^{\alpha}$, $h=1$, 
$$
g(s) = \frac{ \big( \frac{1}{s} \big)^{\frac{1}{\alpha} + 1} f_{\alpha}\left(\big(\frac{1}{st}\big)^{\frac{1}{\alpha}}\right)}{f_{\alpha}\left( \big(\frac{1}{t}\big)^{\frac{1}{\alpha}} \right)}\ \ \ \ \ \text{and}\ \ \ \ \ \epsilon_n(s) = g(s)
\left[ \frac{1 + \delta_n\big( (st)^{\frac{1}{\alpha}} \big)}{1 + \delta_n\big( t^{\frac{1}{\alpha}} \big)} - 1 \right]\ .
$$
As $\eta$, it can be chosen as any number strictly greater than 3, since $\ggsf_{R(\alpha, n, t n^{\alpha})}$ is an entire function. Apropos of the analytic continuation of the right-hand side of 
\eqref{bernardo_hwuang}, recall the definition of the Wright-Mainardi function $M_{\alpha}(z) := \frac{1}{\pi} \sum_{n=1}^{\infty} \frac{(-z)^{n-1}}{(n-1)!} \Gamma(\alpha n) \sin(\pi\alpha n)$, 
for $z \in \C$ and $\alpha \in (0,1)$, which coincides with $\frac{1}{\alpha z^{1 + 1/\alpha}} f_{\alpha}\big(\frac{1}{z^{1/\alpha}}\big)$ if $\Im(z) = 0$ and $\Re(z) > 0$ (Mainardi et al. \cite{Mai(10)}). It turns out that
$g$ can be re-written as $g(s) = M_{\alpha}(st)/M_{\alpha}(t)$, for all $s \in \C$, and \eqref{bernardo_hwuang} reads
$$
\ggsf_{R(\alpha, n, t n^{\alpha})}(s) = e^{tn^{\alpha}(s-1)} s \frac{ \int_{-1}^{+\infty} e^{nh(\sigma)} (1+\sigma)^{-\alpha} M_{\alpha}(st (1+\sigma)^{-\alpha}) \ddr \sigma}
{\int_{-1}^{+\infty} e^{nh(\sigma)} (1+\sigma)^{-\alpha} M_{\alpha}(t (1+\sigma)^{-\alpha}) \ddr \sigma} \ .
$$
Furthermore, the rate $\lambda_n + g'(1)$ of the shifted Poisson distribution in \eqref{hwang_TESI} becomes
$$
\omega(t; n,\alpha) := \lambda_n + g'(1) = tn^{\alpha} + \frac{t M'_{\alpha}(t)}{M_{\alpha}(t)}
$$
which is positive whenever $n$ is sufficiently large and $t \leq T(n, \alpha)$. The determination of the asymptotic behavior of $T(n,\alpha)$ is determined by the behavior of $M_{\alpha}$ for large arguments. In fact, as shown in the work of Mainardi et al. \cite{Mai(10)}, $M_{\alpha}(t/\alpha) \sim (2\pi(1-\alpha))^{-1/2} t^{\frac{\alpha - 1/2}{1 - \alpha}} \exp\left\{- \frac{1-\alpha}{\alpha} t^{\frac{1}{1 - \alpha}} \right\}$ as $t \rightarrow +\infty$, in agreement with \eqref{asymptotic_stable}.  

At this stage, this subsection is completed by stating a corollary that matches Proposition \ref{prop:hwang} with Lemma \ref{lm:Laplace1}. 
After recalling that the symbol $N_{\lambda}$ denotes a Poisson random variable with parameter $\lambda$, we have the following
\begin{cor}\label{coro:hwang} 
There exists $n_0$ independent of $t$ such that, for all $n \geq n_0$ and $t \leq T(n,\alpha)$, there holds
\begin{displaymath}
\sum_{k=1}^{\infty} \big| \ppsf[R(\alpha, n, t n^{\alpha}) = k] - \ppsf[1 + N_{\omega(t; n,\alpha)} = k] \big| \leq \frac{\Upsilon(t)}{t n^{\alpha}}
\end{displaymath}
where $T(n,\alpha) \sim n^{\kappa\alpha}$ as $n\rightarrow +\infty$ for some $\kappa \in (0,1)$, and $\Upsilon: (0,+\infty) \rightarrow (0,+\infty)$ is a suitable continuous function which is independent of $n$. Moreover, 
$\Upsilon$ can be chosen in such a way that $\Upsilon(t) = O(1)$ as $t \rightarrow 0$, and $\Upsilon(t)\cdot f_{\alpha}\left(\frac{1}{t^{1/\alpha}}\right) = O(t^{-\infty})$ as $t \rightarrow +\infty$.
\end{cor}

\begin{proof}
It is enough to recover the bounds \eqref{hwang_conclusion_1eps}-\eqref{hwang_conclusion_5eps}. After they are rewritten in terms of $t$, it is enough to exploit the properties of the function $\Delta$ in 
Lemma \ref{lm:Laplace1}.
\end{proof}


\subsection{Conclusion} \label{sect:conclusion}

This subsection contains the heart of the proof of Theorem \ref{be}, whose strategy consists in four main steps. Maintaining the notation used in Subsections \ref{sect:representation} and \ref{sect:poisson}, these steps can be summarized as follows:
\begin{enumerate} 
\item[A)] since the strong law of large numbers hints that $G_{\theta+n,1}^{\alpha} \sim n^{\alpha}$ as $n \rightarrow +\infty$, with probability 1, one tries to rigorously prove that the probability laws of 
$K_n$ and $R(\alpha, n, S_{\alpha,\theta} \cdot n^{\alpha})$ are close, even in total variation;
\item[B)] after conditioning on the hypothesis that $S_{\alpha,\theta} = t$, one invokes Corollary \ref{coro:hwang} to prove that the probability laws of $R(\alpha, n, t n^{\alpha})$ and $1+N_{\omega(t, n,{\alpha})}$
are close in total variation;
\item[C)] by resorting to well-known results about Poisson mixtures contained in \cite{LeCam(55)} (see also Proposition \ref{prop:LeCam} below), one shows that the probability laws of 
$N_{\omega(S_{\alpha,\theta}, n,{\alpha})}$ and $N_{S_{\alpha, \theta} \cdot n^{\alpha}}$ are close in total variation, the random variable $S_{\alpha,\theta}$ being thought of as independent of the family of random variables $\{N_{\lambda}\}_{\lambda > 0}$;
\item[D)] in view of a quantitative law of large numbers for the Poisson process stated in Adell and de la Cal \cite{AdCal(93)}, one concludes by checking that 
the probability laws of $(1+N_{S_{\alpha, \theta} \cdot n^{\alpha}})/n^{\alpha}$ and $S_{\alpha, \theta}$ are close in the Kolmogorov metric. 
\end{enumerate} 

To grasp our strategy, write $\ddr_K(X; Y)$ to denote the Kolmogorov distance between the distribution functions of $X$ and $Y$, and define $\Omega_{n,\alpha,\theta} := 
\omega(S_{\alpha,\theta}; n, \alpha) \ind\{S_{\alpha,\theta} \leq T(n,\alpha)\} \geq 0$, $T(n,\alpha)$ being the same as in Corollary \ref{coro:hwang}, to get
\begin{align*}
\ddr_K\big(K_n/n^{\alpha}; S_{\alpha,\theta}\big) & \leq \ddr_K\big(K_n/n^{\alpha}; R(\alpha, n,  n^{\alpha}S_{\alpha,\theta})/n^{\alpha}\big) \\
&\quad + \ddr_K\big(R(\alpha, n, n^{\alpha}S_{\alpha,\theta})/n^{\alpha}; (1+N_{\Omega_{n,\alpha, \theta}})/n^{\alpha}\big) \\
&\quad + \ddr_K\big((1+N_{\Omega_{n,\alpha, \theta}})/n^{\alpha}; (1+N_{S_{\alpha, \theta} n^{\alpha}})/n^{\alpha}\big) \\
&\quad + \ddr_K\big((1+N_{S_{\alpha, \theta}n^{\alpha}})/n^{\alpha}; S_{\alpha, \theta}\big) 
\end{align*}
where $S_{\alpha,\theta}$ is independent of both $\{R(\alpha, n, z)\}_{z>0}$ and $\{N_{\lambda}\}_{\lambda > 0}$. Moreover, $N_0$ is intended, henceforth, 
as the degenerate random variable equal a.s. to 0. After recalling that 
$\ddr_K(aX+b; aY+b) = \ddr_K(X; Y)$ for any $a > 0$ and $b \in \R$, and that $\ddr_K(X; Y) \leq \ddr_{TV}(X; Y) := \sup_{B \in \mathscr{B}(\R)} |\ppsf[X\in B] - \ppsf[Y\in B]|$, $\ddr_{TV}$ being the total variation distance, one can write
\begin{align}
\ddr_K\big(K_n/n^{\alpha}; S_{\alpha,\theta}\big) &\leq \ddr_{TV}\big(K_n; R(\alpha, n, n^{\alpha}S_{\alpha,\theta})\big) \label{generalsplit} \\
&\quad+ \int_0^{T(n,\alpha)} \!\!\!\!\! \ddr_{TV}\big(R(\alpha, n, t n^{\alpha}); 1+N_{\omega(t;n,\alpha, \theta)}\big) f_{S_{\alpha,\theta}}(t) \ddr t \nonumber  \\
&\quad+ \ppsf\big[S_{\alpha, \theta} > T(n,\alpha) \nonumber \big] \\
&\quad+ \ddr_K\big(N_{\Omega_{n,\alpha, \theta}}; N_{S_{\alpha, \theta} \cdot n^{\alpha}}\big) + \ddr_K\big((1+N_{S_{\alpha, \theta} \cdot n^{\alpha}})/n^{\alpha}; S_{\alpha, \theta}\big)\ . \nonumber
\end{align}

Hence, one starts by studying the first term on the RHS of \eqref{generalsplit}.
\begin{prp} \label{prop:first_interpolant}
For fixed $\alpha \in (0,1)$ and $\theta > 0$, there exists a positive constant $C_1(\alpha, \theta)$ such that
\begin{align} \label{eq:mainstep1a}
& 2\ddr_{TV}\big(K_n; R(\alpha, n, n^{\alpha}S_{\alpha,\theta})\big)\\
&\notag\quad= \sum_{k=1}^n \Big| \ppsf\big[K_n = k \big] - \ppsf\big[R(\alpha, n, n^{\alpha}S_{\alpha,\theta}) = k \big]\Big| \leq C_1(\alpha, \theta)/n
\end{align}
holds for all $n \in \N$.
\end{prp}

\begin{proof}[Proof of Proposition \ref{prop:first_interpolant}]
The law of $R(\alpha, n, S_{\alpha,\theta} \cdot n^{\alpha})$ is given by
\begin{equation}\label{eq:first_interpolant}
\ppsf[R(\alpha, n, S_{\alpha,\theta} \cdot n^{\alpha}) = k] = \int_0^{+\infty} \left[\frac{\Ccr(n,k;\alpha) (t n^{\alpha})^k}{\sum_{j=1}^n \Ccr(n,j;\alpha) (t n^{\alpha})^j}\right] f_{S_{\alpha,\theta}}(t) \ddr t
\end{equation}
for $k = 1, \dots, n$. Combining \eqref{dist_prior}, \eqref{InBernardo} and \eqref{eq:first_interpolant}, the LHS of \eqref{eq:mainstep1a} is equal to
$$
\sum_{k=1}^n \Big| \Ccr(n,k;\alpha) \frac{\Gamma(k + \theta/\alpha)}{\Gamma(\theta/\alpha)} \frac{\Gamma(\theta)}{\Gamma(n + \theta)} - \int_0^{+\infty} \frac{\Ccr(n,k;\alpha) (tn^{\alpha})^k}{d_n(t)} f_{S_{\alpha,\theta}}(t) \ddr t\Big|\ .
$$
Set $d_n^{\ast}(t) := e^{tn^{\alpha}} (n-1)! \frac{1}{t^{1/\alpha}} f_{\alpha}\big(\frac{1}{t^{1/\alpha}}\big)$ and majorize the above quantity by
\begin{align*}
& \sum_{k=1}^n \Big| \Ccr(n,k;\alpha) \frac{\Gamma(k + \theta/\alpha)}{\Gamma(\theta/\alpha)} \frac{\Gamma(\theta)}{\Gamma(n + \theta)} - \int_0^{+\infty} \frac{\Ccr(n,k;\alpha) (tn^{\alpha})^k}{d_n^{\ast}(t)} f_{S_{\alpha,\theta}}(t) \ddr t\Big|\ \nonumber \\
&\quad+ \int_0^{+\infty} \frac{|d_n^{\ast}(t) - d_n(t)|}{d_n^{\ast}(t)} f_{S_{\alpha,\theta}}(t) \ddr t\ .
\end{align*}
Since $\int_0^{+\infty} \frac{(tn^{\alpha})^k}{d_n^{\ast}(t)} f_{S_{\alpha,\theta}}(t) \ddr t = \frac{1}{(n-1)!} \frac{\Gamma(k + \theta/\alpha)}{n^{\theta}} \frac{\Gamma(\theta)}{\Gamma(\theta/\alpha)}$ holds after taking account of \eqref{ml}, one has
\begin{align*}
& \sum_{k=1}^n \Big| \Ccr(n,k;\alpha) \frac{\Gamma(k + \theta/\alpha)}{\Gamma(\theta/\alpha)} \frac{\Gamma(\theta)}{\Gamma(n + \theta)} - \int_0^{+\infty} \frac{\Ccr(n,k;\alpha) (tn^{\alpha})^k}{d_n^{\ast}(t)} f_{S_{\alpha,\theta}}(t) \ddr t\Big| \\
&\quad = \Big| 1 - \frac{\Gamma(\theta+n)}{\Gamma(n) n^{\theta}}\Big| \leq \frac{([\theta]+1)! - 1}{n}
\end{align*}
where: i) the identity is a consequence of \eqref{dist_prior}; ii) the inequality, in which $[\theta]$ denotes the integral part of $\theta$, follows from the well-known Tricomi-Erdelyi expansion of the gamma ratio. 
Finally, by resorting to \eqref{InBernardo}--\eqref{Laplace1}, one has
\begin{align*}
&\int_0^{+\infty} \frac{|d_n^{\ast}(t) - d_n(t)|}{d_n^{\ast}(t)} f_{S_{\alpha,\theta}}(t) \ddr t \\
&\quad\leq \Big| \frac{(n/e)^n \sqrt{2\pi n}}{n!} - 1 \Big| + \left(\frac{(n/e)^n \sqrt{2\pi n}}{n!}\right) \frac{1}{n} \int_0^{+\infty} \Delta(t^{1/\alpha}) f_{S_{\alpha,\theta}}(t) \ddr t
\end{align*}
which leads to the desired conclusion, in view of the well-known Stirling approximation and the fact that $\int_0^{+\infty} \Delta(t^{1/\alpha}) f_{S_{\alpha,\theta}}(t) \ddr t < +\infty$, by virtue of Lemma \ref{lm:Laplace1}. Indeed, suffice it to recall that $\Delta(t^{1/\alpha}) = O(1)$ and $f_{S_{\alpha,\theta}}(t) \sim t^{\theta/\alpha}$ as $t \rightarrow 0$, whereas the condition
$\Delta(z) f_{\alpha}(1/z) = O(z^{-\infty})$, valid as $z \rightarrow +\infty$, entails the integrability of $\Delta(t^{1/\alpha}) f_{S_{\alpha,\theta}}(t)$ at infinity.
\end{proof}

The conclusion of the proof of Theorem \ref{be} requires to combine results of Subsection \ref{sect:poisson} with other known results. According to point B), invoke 
Corollary \ref{coro:hwang} to obtain
\begin{align*}
& \int_0^{T(n,\alpha)} \left(\sum_{k=1}^{\infty} \Big| \ppsf[R(\alpha, n, t n^{\alpha}) = k] - \ppsf[1 + N_{\omega(t; n,\alpha)} = k] \Big|\right) f_{S_{\alpha,\theta}}(t) \ddr t \\
&\quad \leq C_2(\alpha, \theta)/n^{\alpha} \nonumber
\end{align*}
where $C_2(\alpha, \theta) := \int_0^{+\infty} t^{-1}\Upsilon(t) f_{S_{\alpha,\theta}}(t)  \ddr t < +\infty$ in view of the asymptotic properties of $f_{S_{\alpha,\theta}}$. In order to proceed with point C), notice that
\begin{displaymath}
\ppsf[\omega(S_{\alpha,\theta}; n, \alpha) < 0] = \int_{T(n,\alpha)}^{+\infty} f_{S_{\alpha,\theta}}(t) \ddr t  \sim \frac{1}{n^{\alpha}}
\end{displaymath}
as $n \rightarrow \infty$. 

To bound the fourth term on the right-hand side of \eqref{generalsplit}, note that this is the Kolmogorov distance between two Poisson mixtures. This problem is tackled in Theorem 1 of Le Cam \cite{LeCam(55)}, which is re-stated for the reader's ease.
\begin{prp}[Le Cam] \label{prop:LeCam}
Given two probability measures $\gamma_1, \gamma_2$ on $\mathscr{B}([0,+\infty))$, one gets
\begin{align*}
& \sup_{x \geq 0} \Big| \int_0^{+\infty} \ppsf[N_{\lambda} \leq x] \gamma_1(\ddr x) - \int_0^{+\infty} \ppsf[N_{\lambda} \leq x] \gamma_2(\ddr x)\Big| \label{LeCamInequality} \\
&\quad\leq 2(1+e^2) \sup_{B \in \mathscr{B}([0,+\infty))} |\gamma_1(B) - \gamma_2(B)| \ . \nonumber
\end{align*}
\end{prp}
Therefore, a direct application of Proposition \ref{prop:LeCam} yields 
\begin{align*}
& \sum_{k=0}^{\infty} \Big| \int_0^{T(n,\alpha)} \big[ \ppsf[N_{\omega(t; n,\alpha)} = k] - \ppsf[N_{t n^{\alpha}} = k] \big] f_{S_{\alpha,\theta}}(t) \ddr t \\
&\quad\leq 2(1+e^2) \int_0^{T(n,\alpha)} \Big| f_{S_{\alpha,\theta}}(t) - n^{\alpha} f_{S_{\alpha,\theta}}\big(\psi_n(t n^{\alpha})\big) \psi'_n(t n^{\alpha}) \Big| \ddr t \sim \frac{1}{n^{\alpha}}  \nonumber
\end{align*}
the function $\psi_n$ denoting the inverse of $(0, T(n,\alpha)) \ni t \mapsto \omega(t; n,\alpha)$.

Finally, according to point D), first get rid of the shift $+1$ relative to the Poisson mixture. In fact, this yields an extra term which goes to zero as $1/n^{\alpha}$, since $\int_0^{1/n^{\alpha}}  f_{S_{\alpha, \theta}}(t) \ddr t \sim 1/n^{\alpha}$. Then, it remains to consider the following term
\begin{align*}
& \sup_{x \geq 0} \Big| \ppsf[N_{S_{\alpha,\theta} \cdot n^{\alpha}} \leq x n^{\alpha}] - \ppsf[S_{\alpha,\theta} \leq x] \Big| \\
&\quad= \sup_{x \geq 0} \Big| \int_0^{\infty} \left\{ \ppsf[N_{t \cdot n^{\alpha}} \leq x n^{\alpha}] - \ind\{t \leq x\} \right\} f_{S_{\alpha,\theta}}(t)\ddr t \Big| \ . \nonumber
\end{align*}
This quantity is bounded by $C_3(\alpha, \theta)/n^{\alpha}$ for a suitable $C_3(\alpha, \theta) > 0$, as a direct application of Theorem 1 of Adell and de la Cal \cite{AdCal(93)}, which is here re-stated for the reader's ease.
\begin{prp}[Adell-de la Cal] \label{prop:AdCal}
Consider a probability density $h : [0,+\infty) \rightarrow [0,+\infty)$ satisfying the following conditions
\begin{itemize}
\item[i)] $h \in \mathrm{C}_b^1([0,+\infty))$, the space of $\mathrm{C}^1$ functions on $[0,+\infty)$ which are bounded together with their first derivatives;
\item[ii)] $\sup_{x\geq 0} |h(x) + \frac12 xh'(x)| < +\infty$:
\item[iii)] there exist $b_0 \geq 0$ and $0<\gamma<\frac12$ such that
\end{itemize}
\begin{displaymath}
\lim_{s\rightarrow+\infty} \sup_{b \geq b_0}\ b \!\!\!\!\! \sup_{\substack{x \geq 0, \\ |x-b| \leq b^{1-\gamma}/s^{\gamma}}} \!\!\!\!\! | h'(x) - h'(b)| = 0\ .
\end{displaymath}
Then, there exists a constant $C(h)$, depending on the above analytical properties of $h$, for which
$$
\sup_{x \geq 0} \Big| \int_0^{\infty} \left\{ \ppsf[N_{t m} \leq x m] - \ind\{t \leq x\} \right\} h(t)\ddr t \Big| \leq \frac{C(h)}{m} 
$$
holds for all $m > 0$.
\end{prp}

To apply Proposition \ref{prop:AdCal} it is enough to set $m = n^{\alpha}$ and $h = f_{S_{\alpha, \theta}}$. Checking conditions i) and ii) is obvious, while the condition iii) can be verified as in Section 3 of the work of Adell and de la Cal \cite{AdCal(93)}. The proof Theorem \ref{be} is completed.

\begin{rmk}
According to Remark \ref{rmk_ext}, a natural extension of Theorem \ref{be} may be to consider the random number $M_{l,n}$ of blocks in the random partition $\Pi_{n}$ induced by a random sample from $\tilde{\mathfrak{p}}_{\alpha,\theta}$. In particular, Pitman \cite{Pit(06)} showed that
\begin{equation}\label{adivl}
\frac{M_{l,n}}{n^{\alpha}}\stackrel{\text{a.s.}}{\longrightarrow}\frac{\alpha[1-\alpha]_{(l-1)}}{l!} S_{\alpha,\theta},
\end{equation}
as $n\rightarrow+\infty$. Then one may apply the results in Section \ref{sect:laplace} and Section \ref{sect:poisson} to obtain a Berry-Esseen theorem for \eqref{adivl}. A more challenging task is to prove Theorem \ref{be} for the class of $\alpha$-stable Poisson-Kingman processes (Pitman \cite{Pit(06)}). 
\end{rmk}

\section{Pitman's posterior $\alpha$-diversity}

Pitman's posterior $\alpha$-diversity \eqref{post_adiv} was first introduced in Favaro et al. \cite{Fav(09)} for Bayesian nonparametric inference of the number of new species in $m$ additional samples, given $n$ initial observed samples. Samples are modeled by the random vectors $(X_1, \dots, X_n)$ and $(X_{n+1}, \dots, X_{n+m})$, which are subsequent segments of the exchangeable sequence $\{X_i\}_{i \geq 1}$ with $\tilde{\mathfrak{p}}_{\alpha,\theta}$ as directing measure. Let $K_m^{(n)} = K_m^{(n)}(X_1, \dots, X_n; X_{n+1}, \dots, X_{n+m}) := \sum_{1\leq i\leq m} \ind\{X_{n+i} \not\in \{X_1,\dots, X_n\}\}$ be the number of new species in $(X_{n+1}, \dots, X_{n+m})$. Lijoi et al. \cite{Lij(07)} obtained the posterior distribution of $K_{m}^{(n)}$ given $(X_{1},\ldots,X_{n})$. For $(X_{1},\ldots,X_{n})=(x_1, \dots, x_n) \in \N^{n}$, $K_{n}=j\in\{1,\ldots,n\}$ such that $\{X_1 = x_1,\ldots, X_n = x_n\} \cap \{K_n=j\} \neq \emptyset$, and $\mathbf{N}_{n}=(n_{1},\ldots,n_{j})\in\N^{j}$ such that $\sum_{1\leq i\leq j}n_{i}=n$, they showed that
\begin{align}\label{post_k}
& \ppsf[K_m^{(n)}=k\,|\,X_1 = x_1,\ldots,X_n = x_n, K_n=j]\\
&\notag\quad=\ppsf[K_m^{(n)}=k\,|\,K_{n}=j, \mathbf{N}_n =(n_1,\ldots,n_j)]\\
&\notag\quad=\ppsf[K_m^{(n)}=k\,|\,K_{n}=j]=\frac{\left[\frac{\theta}{\alpha}+j\right]_{(k)}}{[\theta+n]_{(m)}}\mathscr{C}(m,k;\alpha,-n+j\alpha)
\end{align}
for any $k\in\{0,1,\ldots,m\}$. Here, $\mathscr{C}(n,k;s,r)$ denotes the non-central generalized factorial coefficient, i.e., $\mathscr{C}(n,k;s,r)=\frac{1}{k!}\sum_{i=0}^k(-1)^{i}{k\choose i}[-is-r]_{(m)}$ (Charalambides \cite{Cha(02)}) The posterior distribution \eqref{post_k} is at the basis of Bayesian nonparametric inference for $K_m^{(n)}$, e.g. estimation and uncertainty quantification (Lijoi et al. \cite{Lij(07)}). However, since the computational burden for evaluating the non-central generalized factorial coefficient becomes overwhelming for large $m$, the evaluation of \eqref{post_k} is practically impossible for large $m$. To overcome this drawback, Favaro et al. \cite{Fav(09)} introduced Pitman's posterior $\alpha$-diversity $S_{\alpha,\theta}(n,j)$ to provide a practical tool to obtain large $m$ approximated posterior inferences for $K_{m}^{(n)}$ via straightforward Monte Carlo sampling from $S_{\alpha,\theta}(n,j)$. Here, we formulate a Berry-Esseen theorem for Pitman's posterior $\alpha$-diversity, thus quantifying the error of the approximated inference.

We start with a novel representation of the posterior distribution \eqref{post_k} in terms of the distribution of a  compound sum of independent Bernoulli random variables with random parameter. For any $n\in\N$ and $p\in[0,1]$, we denote by $Z(n,p)$ a 
Binomial random variable with parameters $n,p$. Also, we use the symbol $B_{a,b}$ to denote a Beta random variable with parameters $a,b > 0$.
\begin{lem}\label{lem_sum}
Let $n,m\in\N$ and $j\in \{1,\dots,n\}$. For any $\alpha\in(0,1)$ and $\theta> -\alpha$, let $K^{\ast}_{m}$ be the number of blocks of the random partition $\Pi_{m}$ induced by a random sample $(X_{1}^{\ast},\ldots,X_{m}^{\ast})$ from $\tilde{\mathfrak{p}}_{\alpha,\theta+n}$. Then, for any $k\in\{0,1,\ldots,m\}$, there holds
\begin{displaymath}
\ppsf[K_m^{(n)} = k\,|\, K_n=j] = \ppsf[Z(K_{m}^{\ast},B_{\theta/\alpha+j,n/\alpha-j})=k],
\end{displaymath}
where the random variables $Z$, $K_{m}^{\ast}$ and $B_{\theta/\alpha+j,n/\alpha-j}$ are mutually independent under $\ppsf$.
\end{lem}

\begin{proof}
Let $[x]_{[n,a]}$ be the falling factorial of $x$ of order $n$ and decrement $a$, i.e. $[x]_{[n,a]}=\prod_{0\leq i\leq n-1}(x-ia)$, let $S(n,k)$ be the Stirling number of the second kind, and let $S(n,k;x) := \sum_{k\leq t\leq n}{t\choose i}[x]_{[i-k,1]}S(n,i)$ be the non-central Stirling number of the second kind. Furthermore, let $s(n,k)$ be the Stirling number of the first kind and recall that $[x]_{[n,1]}=\sum_{i=1}^n s(n,i)x^i$. We combine the definition of $S(n,k;x)$ with Proposition 1 in Favaro et al. \cite{Fav(09)} to write, for any $r \in \N$,
\begin{align*}
&\quad \eesf[(K_m^{(n)})^r\,|\,K_n=j]\\
&\quad=\sum_{t=0}^r S(r,t)\frac{\left[j+\frac{\theta}{\alpha}\right]_{(t,1)}}{\left[\frac{\theta+n}{\alpha}\right]_{(t)}}
\left[\frac{\theta+n}{\alpha}\right]_{(t)}\sum_{i=0}^t (-1)^{t-i}{t\choose i}\frac{[\theta+n+i\alpha]_{(m)}}{[\theta+n]_{(m)}}\\
&\quad=\sum_{t=0}^r S(r,t)\frac{\left[j+\frac{\theta}{\alpha}\right]_{(t,1)}}{\left[\frac{\theta+n}{\alpha}\right]_{(t)}}
\eesf[[K^{\ast}_{m}]_{[t,1]}]\\
&\quad=\sum_{t=0}^r S(r,t)\eesf[[K^{\ast}_{m}]_{[t,1]}]
\frac{\Gamma\left(\frac{\theta+n}{\alpha}\right)}{\Gamma\left(\frac{\theta}{\alpha}+j\right)\Gamma\left(\frac{n}{\alpha}-j\right)}
\int_{0}^{1}x^{t+\frac{\theta}{\alpha}+j-1}(1-x)^{\frac{n}{\alpha}-j-1}\ddr x\\
&\quad=\sum_{t=0}^{r}S(r,t)\eesf[[K^{\ast}_m]_{[t,1]}]\eesf[(B_{\theta/\alpha+j,n/\alpha-j})^t ]\\
&\quad=\eesf\left[\left(Z(K_{m}^{\ast},B_{\theta/\alpha+j,n/\alpha-j})\right)^r \right],
\end{align*}
where the last identity follows by $\eesf[(Z(n,p))^{r}]=\sum_{0\leq t\leq r}S(r,t)[n]_{[t,1]}p^{t}$. Then the proof is completed
by resorting to the one-to-one correspondence between the conditional law of $K_m^{(n)}$ and the sequence of its conditional moments.
\end{proof}

Let $\ffsf_m(n,j)$ and $\ffsf_{\alpha,\theta}(n,j)$ stand for the distribution functions of $K^{(n)}_{m}/m^{\alpha}$ and $S_{\alpha,\theta}(n,j)$, respectively, conditioned on the event 
$\{K_n = j\}$. Then, the followiong theorem may be interpreted as the natural posterior counterpart of Theorem \ref{be}, namely a Berry-Esseen theorem for Pitman's posterior $\alpha$-diversity.

\begin{thm}\label{be_post}
Let $n \in \N$ and $j\in \{1,\dots,n\}$. For any $\theta>0$ and $\alpha\in(0,1)$ such that $\frac{n}{\alpha}-j\geq 1$, there exists a positive constant $C_{\alpha, \theta}(n,j)$, depending solely on $n$, $j$, $\alpha$ and $\theta$, such that $\ddr_K(\ffsf_{m}(n,j); \ffsf_{\alpha,\theta}(n,j)) \leq m^{-\alpha}C_{\alpha, \theta}(n,j)$ holds for every $m \in \N$.
\end{thm}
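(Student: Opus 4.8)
The plan is to mirror the three-step architecture of the proof of Theorem \ref{be}, exploiting the representation \eqref{eq_id1} in place of \eqref{representation_prior}. By Lemma \ref{lem_sum}, conditionally on $(X_1,\dots,X_n)$ with $K_n=j$, one has $K_m^{(n)}\stackrel{\text{d}}{=}Z(K_m^{\ast},B_{\theta/\alpha+j,\,n/\alpha-j})$ where $K_m^{\ast}$ is the block count of a $\text{PD}(\alpha,\theta+n)$ sample of size $m$ and $B:=B_{\theta/\alpha+j,\,n/\alpha-j}$ is an independent Beta variable. The target limit decomposes the same way: $S_{\alpha,\theta}(n,j)\stackrel{\text{d}}{=}B\cdot S_{\alpha,\theta+n}$. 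So the first step is to apply Theorem \ref{be} itself to $K_m^{\ast}$ (legitimate since $\theta+n>5$ when $n\geq 5$ and $\theta>0$), giving $\ddr_K(\text{law of }K_m^{\ast}/m^{\alpha};\ \ffsf_{\alpha,\theta+n})\leq m^{-\alpha}C_{\alpha,\theta+n}$. Conditioning on $B=b$ and on $K_m^{\ast}/m^{\alpha}=w$, the remaining randomness is a binomial $Z(K_m^{\ast},b)$, whose normalized value $Z(K_m^{\ast},b)/m^{\alpha}$ concentrates around $b\,w$ as $m\to\infty$.

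The second step is therefore a quantitative binomial law-of-large-numbers bound: conditionally on $K_m^{\ast}=N$ and $B=b$, show that $\ddr_K\big(\text{law of }Z(N,b)/m^{\alpha};\ \delta_{bN/m^{\alpha}}\big)$ is controlled. The natural tool is precisely the de Finetti Berry-Esseen result of Dolera--Favaro \cite{Dol(18)} announced in the introduction, applied to the exchangeable Bernoulli sequence underlying $Z(N,b)$ (or, more elementarily, a Chebyshev/Bernstein estimate: $\mathrm{Var}(Z(N,b)\mid N,b)=Nb(1-b)\leq N$, and $N=K_m^{\ast}$ is itself $O(m^{\alpha})$ with overwhelming probability by the concentration/deviation results cited for $K_n$). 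Integrating the conditional bound against the joint law of $(K_m^{\ast},B)$, and using $\E[K_m^{\ast}]=O(m^{\alpha})$ together with the tightness of $K_m^{\ast}/m^{\alpha}$, yields $\ddr_K(\mu_m;\ \text{law of }B\cdot S_{\alpha,\theta+n})\leq m^{-\alpha}C'_{\alpha,\theta}(n,j)$ after a triangle-inequality splice with the step-one bound. The condition $n/\alpha-j\geq 1$ is exactly what guarantees the Beta density $B_{\theta/\alpha+j,\,n/\alpha-j}$ is bounded (or at least integrable against the relevant kernels), which is needed to push the conditional estimate through the $\ddr_b$ integration; this is the analogue of the role played by the density bounds on $f_\alpha$ in Lemma \ref{lm:Laplace1}.

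I would organize this as: (i) a lemma recording the decomposition $S_{\alpha,\theta}(n,j)\stackrel{\text{d}}{=}B\,S_{\alpha,\theta+n}$ and the fact that mixing a $\ddr_K$-bound over an independent multiplicative factor $B$ preserves the bound up to a constant depending on $\|f_B\|_\infty$ (valid precisely when $n/\alpha-j\geq1$); (ii) the binomial concentration lemma with explicit constant, combined with the deviation bound on $K_m^{\ast}$ to handle the event $\{K_m^{\ast}\gg m^{\alpha}\}$; (iii) assembly via triangle inequality:
\begin{align*}
\ddr_K(\mu_m;\mu_{\alpha,\theta}(n,j)) &\leq \ddr_K\big(\text{law }Z(K_m^{\ast},B)/m^{\alpha};\ \text{law }B\,K_m^{\ast}/m^{\alpha}\big) \\
&\quad + \ddr_K\big(\text{law }B\,K_m^{\ast}/m^{\alpha};\ \text{law }B\,S_{\alpha,\theta+n}\big),
\end{align*}
the first term handled by (ii) and the second by (i) plus Theorem \ref{be} applied with $\theta+n$. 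The main obstacle I anticipate is the second step: controlling the $\ddr_K$-distance after the multiplicative mixing by $B$ requires that $bw$ as a function of $(b,w)$ pushes forward the joint law nicely onto the real line without creating atoms, and that the conditional LLN error, which is naturally $O(\sqrt{N}/m^{\alpha})=O(m^{-\alpha/2})$ from a crude variance bound, is actually improved to $O(m^{-\alpha})$ — this likely forces use of the sharper \cite{Dol(18)} rate rather than a bare Chebyshev argument, and careful bookkeeping of how the random sample size $K_m^{\ast}$ interacts with that rate.
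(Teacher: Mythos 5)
Your overall architecture coincides with the paper's: represent $K_m^{(n)}$ via Lemma \ref{lem_sum} as $Z(K_m^{\ast}, B_{\theta/\alpha+j,n/\alpha-j})$, split by the triangle inequality into a ``binomial with random parameter'' error plus a term controlled by Theorem \ref{be} applied at level $\theta+n>5$, and invoke the de Finetti Berry--Esseen bound of \cite{Dol(18)} for the first piece. Two steps, however, would fail or are misdirected as written. First, the intermediate target $\delta_{bN/m^{\alpha}}$ obtained by conditioning on $B=b$ is unusable: the Kolmogorov distance between the law of $Z(N,b)/m^{\alpha}$ and a point mass is bounded below by an absolute constant (the binomial puts mass roughly $1/2$ on each side of its mean), so no law-of-large-numbers rate, Chebyshev or otherwise, can make it small. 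The comparison must be performed with $B$ kept random, so that, conditionally on $K_m^{\ast}=N$, the normalized sum $\frac{1}{N}\sum_{i=1}^{N}Z_i$ of exchangeable Bernoulli variables is compared with the absolutely continuous law of $B$ itself; Corollary 3 of \cite{Dol(18)} then gives the rate $2\sup_x|F_{B_{\theta/\alpha+j,n/\alpha-j}}''(x)|/(N+1)$ --- this is the paper's route, and it is the reason the hypotheses are phrased as $\theta/\alpha+j\geq 1$ and $n/\alpha-j\geq 1$ (boundedness of the second derivative of the Beta distribution function, not merely of its density). You do gesture at this in your closing caveat, but the proof as organized in your items (ii)--(iii) still routes through the point-mass comparison.

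Second, the de Finetti step leaves you with $\eesf[1/(K_m^{\ast}+1)]$, and turning this into $O(m^{-\alpha})$ requires controlling the \emph{lower} tail of $K_m^{\ast}$ (small values are what make $1/(K_m^{\ast}+1)$ large), whereas your step (ii) invokes deviation bounds for the event $\{K_m^{\ast}\gg m^{\alpha}\}$, which is irrelevant here. The paper resolves this without any tail estimate: integrating $\varphi(x)=(1+x)^{-1}$ by parts against the two distribution functions gives $\eesf[1/(K_m^{\ast}+1)]\leq \ddr_K(F_{K_m^{\ast}/m^{\alpha}},F_{S_{\alpha,\theta+n}})+m^{-\alpha}\eesf[1/S_{\alpha,\theta+n}]$, after which the negative moment $\eesf[1/S_{\alpha,\theta+n}]$ is computed explicitly and shown to be finite, and the Kolmogorov term is again absorbed by Theorem \ref{be}. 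Filling these two points would bring your sketch in line with the paper's proof.
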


\begin{proof}
For a generic random variable $X$, let $\ffsf_{X}$ denote its distribution function. Then, notice that $\ffsf_{\alpha,\theta}(n,j)(x) = 
\eesf[F_{B_{\theta/\alpha+j,n/\alpha-j}}(x/S_{\alpha,\theta+n})]$. Hereafter, it will be shown there exists a suitable constant $C_{\alpha,\theta}^{\ast}(n,j)$ such that
\begin{equation} \label{eq:BoundKmnPost}
\ddr_K(\ffsf_{m}(n,j); \ffsf_{\alpha,\theta}(n,j)) \leq C_{\alpha,\theta}^{\ast}(n,j)\eesf\left[ \frac{1}{K^{\ast}_{m}+1} \right] + \ddr_{K}(\ffsf_{K_{m}^{\ast}/m^{\alpha}};\ffsf_{S_{\alpha,\theta+n}}),
\end{equation}
is valid, along with
\begin{equation}\label{second_bound}
\eesf\left[\frac{1}{K^{\ast}_{m}+1} \right] \leq \frac{1}{m^{\alpha}}\eesf\left[ \frac{1}{S_{\alpha,\theta+n}}\right] + \ddr_{K}(\ffsf_{K^{\ast}_{m}/m^{\alpha}}; \ffsf_{S_{\alpha,\theta+n}}).
\end{equation}
With regards to \eqref{eq:BoundKmnPost}, write
\begin{align}\label{dis1}
&\notag \ddr_K(\ffsf_{m}(n,j); \ffsf_{\alpha,\theta}(n,j))
=\sup_{x\geq0}\left|\ffsf_{K_{m}^{(n)}}(m^{\alpha} x)-\eesf\left[\ffsf_{B_{\theta/\alpha+j,n/\alpha-j}}\left(\frac{x}{S_{\alpha,\theta+n}}\right)\right]\right|\\
&\notag\quad \leq \sup_{x\geq0}\left|\ffsf_{K_{m}^{(n)}}(m^{\alpha} x)-\eesf\left[\ffsf_{B_{\theta/\alpha+j,n/\alpha-j}}\left(\frac{m^{\alpha}x}{K^{\ast}_{m}}\right)\right]\right|\\
&\notag\quad\quad+\sup_{x\geq0}\left|\eesf\left[\ffsf_{B_{\theta/\alpha+j,n/\alpha-j}}\left(\frac{m^{\alpha}x}{K^{\ast}_{m}}\right)\right]-\eesf\left[\ffsf_{B_{\theta/\alpha+j,n/\alpha-j}}\left(\frac{x}{S_{\alpha,\theta+n}}\right)\right]\right|\\
&\quad =\sup_{x\geq0}\left|\ffsf_{K_{m}^{(n)}}(x)-\eesf\left[\ffsf_{B_{\theta/\alpha+j,n/\alpha-j}}\left(\frac{x}{K^{\ast}_{m}}\right)\right]\right|\\
&\notag\quad\quad+\sup_{x\geq0}\left|\eesf\left[\ffsf_{B_{\theta/\alpha+j,n/\alpha-j}}\left(\frac{m^{\alpha}x}{K^{\ast}_{m}}\right)\right]-\eesf\left[\ffsf_{B_{\theta/\alpha+j,n/\alpha-j}}\left(\frac{x}{S_{\alpha,\theta+n}}\right)\right]\right|
\end{align}
and we treat separately the terms in \eqref{dis1}. With regard to the first term,
\begin{align*}
&\sup_{x\geq0}\left|\ffsf_{K_m^{(n)}}(x)-\eesf\left[\ffsf_{B_{\theta/\alpha+j,n/\alpha-j}}\left(\frac{x}{K^{\ast}_{m}}\right) \right]\right| \\
&\quad\leq \eesf\left[ \sup_{x\geq0}\left|\ffsf_{K_{m}^{(n)}}(x)-\ffsf_{B_{\theta/\alpha+j,n/\alpha-j}}\left(\frac{x}{K^{\ast}_{m}} \right)\right|\right] \\
&\quad= \eesf\left[ \sup_{x\geq0}|\ffsf_{K_{m}^{(n)}}(K^{\ast}_{m}x)-\ffsf_{B_{\theta/\alpha+j,n/\alpha-j}}(x)| \right] \\
&\quad= \eesf\Big[ \sup_{x\geq0} \Big| \ppsf\Big[ \frac{1}{K^{\ast}_{m}} \sum_{i=1}^{K^{\ast}_{m}} Y_i \leq x\ \big|\ K^{\ast}_{m} \Big] - \ffsf_{B_{\theta/\alpha+j,n/\alpha-j}}(x) \Big| \Big],
\end{align*}
where in the last identity we used Lemma \ref{lem_sum}. Thus, given $B_{\theta/\alpha+j,n/\alpha-j}$, the $Y_i$'s are (conditionally) independent and identically distributed Bernoulli variables with parameter $B_{\theta/\alpha+j,n/\alpha-j}$. Moreover, $K^{\ast}_{m}$, $B_{\theta/\alpha+j,n/\alpha-j}$ and the sequence $\{Y_i\}_{i\geq 1}$ are mutually independent under $\ppsf$. Since $\frac{\theta}{\alpha}+j \geq 1$ and $\frac{n}{\alpha}-j\geq 1$, invoke Corollary 3 in Dolera and Favaro \cite{Dol(18)} to conclude that
\begin{equation} \label{DFdeFinetti}
\sup_{x\geq0} \Big|\ppsf\Big[\frac{1}{K^{\ast}_{m}} \sum_{i=1}^{K^{\ast}_{m}} Y_i \leq x\ |\ K^{\ast}_{m} \Big]- \ffsf_{B_{\theta/\alpha+j,n/\alpha-j}}(x) \Big| 
\leq \frac{C_{\alpha,\theta}^{\ast}(n,j)}{K^{\ast}_{m}+1}\ ,
\end{equation}
with $C_{\alpha,\theta}^{\ast}(n,j) := \frac 12\sup_{x\in[0,1]} |\ffsf_{B_{\theta/\alpha+j,n/\alpha-j}}^{''}(x)|$, exploiting that $1/K^{\ast}_m \leq 2/(K^{\ast}_{m}+1)$. Now,
taking the expectation of both sides of \eqref{DFdeFinetti} yields the first term on the right-hand side of \eqref{eq:BoundKmnPost}. With regard to the second term in \eqref{dis1},
we notice that, for any random variable $X$ supported in $(0,+\infty)$ there holds
$\eesf[\ffsf_{B_{\theta/\alpha+j,n/\alpha-j}}\left(x/X\right) ] = \int_0^1 \ffsf_{X}(x/t) \ddr \ffsf_{B_{\theta/\alpha+j,n/\alpha-j}}(t)$. Whence,
\begin{align*}
&\sup_{x\geq0}\left|\eesf\left[\ffsf_{B_{\theta/\alpha+j,n/\alpha-j}}\left(\frac{m^{\alpha} x}{K^{\ast}_{m}}\right)\right]-\eesf\left[\ffsf_{B_{\theta/\alpha+j,n/\alpha-j}}\left(\frac{x}{S_{\alpha,\theta+n}}\right)\right]\right|\\
&\quad \leq \int_0^1 \sup_{x\geq0}\left|\ffsf_{K^{\ast}_{m}/m^{\alpha}}\left(\frac{x}{t}\right)- \ffsf_{S_{\alpha,\theta+n}}\left(\frac{x}{t}\right)\right|\ddr \ffsf_{B_{\theta/\alpha+j,n/\alpha-j}}(t)\\
&\quad\leq \ddr_{K}(\ffsf_{K^{\ast}_{m}/m^{\alpha}}; \ffsf_{S_{\alpha,\theta+n}}),
\end{align*}
yielding the second term on the RHS of \eqref{eq:BoundKmnPost}. With regard to \eqref{second_bound},
$$
\eesf\left[\frac{1}{K^{\ast}_{m}+1}\right]
\leq \left| \eesf\left[\frac{1}{K^{\ast}_{m}+1}\right] - \eesf\left[\frac{1}{m^{\alpha} S_{\alpha,\theta+n}+1}\right] \right| + \frac{1}{m^{\alpha}}\eesf\left[\frac{1}{S_{\alpha,\theta+n}}\right]
$$
and we deal separately with the two expectations inside the modulus, taking advantage that we have expressions of the form $\eesf[1/(1+X)]$. Indeed, setting $\varphi(x) = (1+x)^{-1}$, we have $\int_0^{+\infty} \varphi(x) \ddr \ffsf(x) = \varphi(0) + \int_0^{+\infty} \varphi^{'}(x)[1 - \ffsf(x)]\ddr x$ for every distribution function $\ffsf$ supported in $[0,+\infty)$. From $\int_0^{+\infty} 
|\varphi^{'}(x)| \ddr x = 1$, we get
\begin{align*}
& \left| \eesf\left[\frac{1}{K^{\ast}_{m}+1}\right] - \eesf\left[\frac{1}{m^{\alpha} S_{\alpha,\theta+n}+1}\right] \right| \\
&\quad\leq \int_0^{+\infty} |\varphi^{'}(x)| \ddr x \cdot \sup_{x \geq 0} |\ffsf_{K^{\ast}_{m}}(x) - \ffsf_{m^{\alpha} S_{\alpha,\theta+n}}(x)| \\
&\quad= \ddr_{K}(\ffsf_{K^{\ast}_{m}}; \ffsf_{m^{\alpha} S_{\alpha,\theta+n}}) = \ddr_{K}(\ffsf_{K^{\ast}_{m}/m^{\alpha}}; \ffsf_{S_{\alpha,\theta+n}})\ .
\end{align*}
To conclude, note that $\eesf[1/S_{\alpha,\theta+n}] = 
\frac{\alpha\Gamma(\theta+n+1)\Gamma(\alpha(\theta+n-\alpha))}{\Gamma(\frac{\theta+n}{\alpha}+1)\Gamma(\theta+n-\alpha)}$
is finite for all $n \in \N$ whenever $\theta > -\alpha$, so that \eqref{second_bound} holds true. The proof is completed by combining \eqref{eq:BoundKmnPost}--\eqref{second_bound}, and then by a direct application of Theorem \ref{be}.
\end{proof}
 
As a direct consequence of Theorem \ref{be_post}, it is possible to construct credible intervals containing high posterior probability for the Bayesian nonparametric estimator 
$\hat{K}_{m}^{(n)}$. Due to the definition of the Kolmogorov distance, for every $a<b$
$$
\ppsf\Big[\frac{K_{m}^{(n)}}{m^{\alpha}} \in [a,b]\,\big|\,K_n=j\Big]
 \geq \ppsf\big[S_{\alpha,\theta}(n,j) \in [a,b]\,\big|\,K_n=j\big] - 2m^{-\alpha}C_{\alpha, \theta}(n,j)\ .
$$
Therefore, after fixing any confidence level, say $1-\gamma$, it is possible to choose $a,b$ so that
\begin{displaymath}
\left\{ \begin{array}{ll}
\text{length of $(a,b)$} \ \text{is\ minimum}\\[0.2cm]
\ppsf\big[S_{\alpha,\theta}(n,j) \in [a,b]\,\big|\,K_n=j\big] \geq 1-\gamma+2m^{-\alpha}C_{\alpha, \theta}(n,j)\ .
\end{array} \right.
\end{displaymath}
This provides with a practical, simple tool for assessing uncertainty quantification in the context of the Bayesian nonparametric estimation of $K_{m}^{(n)}$.

We conclude this section by pointing out a useful generalization of Theorem \ref{be_post}, which follows by the invariance under scaling of the Kolmogorov distance. Let $\lambda(m)$ be an arbitrary function of $m$ such that $\lambda(m)/m^{\alpha}\rightarrow1$ as $m\rightarrow+\infty$. That is, $\lambda(m)$ is asymptotically equivalent to $m^{\alpha}$ for large $m$. Moreover, let $\ffsf^{(\lambda)}_{m}(n,j)$ denote the posterior distribution function of $K^{(n)}_{m}/\lambda(m)$, given $(X_{1},\ldots,X_{n})$ featuring $K_{n}=j\leq n$ distinct species. For any $\alpha\in(0,1)$, $\theta>-\alpha$ and $n\in\N$ such that $\frac{n}{\alpha}-j\geq1$, Theorem \eqref{be_post} implies that there exists a constant $C^{(\lambda)}_{\alpha,\theta}(n,j)$, depending solely on $n$, $j$, $\alpha$ and $\theta$, such that 
\begin{equation}\label{be_post_g}
\ddr_K(\ffsf^{(\lambda)}_{m}(n,j); \ffsf_{\alpha,\theta}(n,j)) \leq \frac{C^{(\lambda)}_{\alpha, \theta}(n,j)}{\lambda(m)}
\end{equation}
for every $m \in \N$. One may apply \eqref{be_post_g} to identify a function $\lambda$ that leads to an upper bound for $\ddr_K(\ffsf^{(\lambda)}_{m}(n,j); \ffsf_{\alpha,\theta}(n,j))$ which is smaller than the upper bound of Theorem \ref{be_post}. For instance, one may consider $\lambda(m)=(c+m)^{\alpha}-c^{\alpha}$, with $c$ being a positive parameter, and then optimize the upper bound in \eqref{be_post_g} with respect to $c$.

\section*{Acknowledgements}

The authors thank an anonymous referee for all her/his comments, corrections, and suggestions which remarkably improved the paper. Emanuele Dolera and Stefano Favaro received funding from the European Research Council (ERC) under the European Union's Horizon 2020 research and innovation programme under grant agreement No 817257. Emanuele Dolera and Stefano Favaro gratefully acknowledge financial support from the Italian Ministry of Education, University and Research (MIUR), ``Dipartimenti di Eccellenza" grant 2018-2022.


\begin{flushleft}

EMANUELE DOLERA\\  
Universit\`a di Pavia\\ 
Dipartimento di Matematica\\
Via Adolfo Ferrata 5, 27100 Pavia, Italy\\
emanuele.dolera@unipv.it

\bigskip

%
STEFANO FAVARO\\
Universit\`a  degli studi di Torino\\ 
Dipartimento di Economia e Statistica\\ 
Lungo Dora Siena 100A, 10134 Torino, Italy\\ 
stefano.favaro@unito.it

\end{flushleft}

\end{document}